\newtheorem{dfn}{Definition}[section]
\newtheorem{prop}[dfn]{Proposition}
\newtheorem{theo}[dfn]{Theorem}
\newtheorem{ex}[dfn]{Example}
\newcommand{\RR}{\mathbb{R}}
\newcommand{\cE}{\mathcal{E}}
\newcommand{\cS}{\mathcal{S}}
\newcommand{\fg}{\mathfrak{g}}
\newcommand{\fk}{\mathfrak{k}}
\newcommand{\fX}{\mathfrak{X}}
\newcommand{\GG}{\mathscr{G}}
\newcommand{\EE}{\mathscr{E}}
\newcommand{\GGG}{\mathscr{G}^{\scriptscriptstyle\#}}
\newcommand{\EEE}{E^{\scriptscriptstyle\#}}
\newcommand{\EEEp}{E'^{\scriptscriptstyle\#}}
\newcommand{\com}{\mathbin{{\scriptstyle\circ}}}
\newcommand{\ten}{\mathbin{\otimes}}
\newcommand{\germ}{\mathord{\mathrm{germ}}}
\newcommand{\src}{\mathord{\mathrm{s}}}
\newcommand{\trg}{\mathord{\mathrm{t}}}
\newcommand{\id}{\mathord{\mathrm{id}}}
\newcommand{\pr}{\mathord{\mathrm{pr}}}
\newcommand{\an}{\mathord{\mathrm{an}}}
\newcommand{\supp}{\mathord{\mathrm{supp}}}
\newcommand{\cu}{\mathord{\epsilon}}
\newcommand{\cm}{\mathord{\Delta}}
\newcommand{\C}{\mathord{\mathit{C}^{\infty}}}
\newcommand{\Cc}{\mathord{\mathit{C}^{\infty}_{c}}}
\newcommand{\G}{\mathord{\Gamma}}
\newcommand{\Gc}{\mathord{\Gamma_{c}}}
\newcommand{\Prim}{\mathord{{\mathrm{Prim}}}}
\newcommand{\Univ}{\mathord{{\mathcal{U}}}}
\newcommand{\Der}{\mathord{{\mathrm{Der}}}}
\newcommand{\End}{\mathord{{\mathrm{End}}}}
\newcommand{\gr}{\mathord{{\mathrm{gr}}}}
\newcommand{\N}{\mathord{{\mathrm{N}}}}
\newcommand{\Lt}{\mathord{{\mathrm{L}}}}
\newcommand{\Rt}{\mathord{{\mathrm{R}}}}
\newcommand{\Ad}{\mathord{{\mathrm{Ad}}}}
\newcommand{\Conj}{\mathord{{\mathrm{C}}}}
\newcommand{\T}{\mathord{{\mathrm{T}}}}
\newcommand{\Diff}{\mathord{\mathrm{Diff}}^{\Lt}}
\newcommand{\Op}{\mathord{\mathrm{\Omega}}}
\title[]
{Convolution bialgebra of a Lie groupoid and transversal distributions}
\author{J. Kali\v{s}nik}
\address{Faculty of mathematics and physics, University of Ljubljana,
         Jadranska 19, 1000 Ljubljana, Slovenia}
\address{Institute of Mathematics, Physics and Mechanics,
         University of Ljubljana, Jadranska 19,
         1000 Ljubljana, Slovenia}
\email{jure.kalisnik@fmf.uni-lj.si}
\author{J. Mr\v{c}un}
\address{Faculty of mathematics and physics, University of Ljubljana,
         Jadranska 19, 1000 Ljubljana, Slovenia}
\address{Institute of Mathematics, Physics and Mechanics,
         University of Ljubljana, Jadranska 19,
         1000 Ljubljana, Slovenia}
\email{janez.mrcun@fmf.uni-lj.si}
\thanks{The authors acknowledge the financial support
        from the Slovenian Research Agency
        (research core funding No. P1--0291).
        The first author also acknowledges the financial support
        from the Slovenian Research Agency
        grants No. J1--1690 and No. N1--0137.}
\subjclass[2020]{16T05,17B35,22A22}
\keywords{Lie groupoid, Lie algebroid, convolution algebra,
universal enveloping algebra, transversal distribution}
\begin{document}

\begin{abstract}
For a Lie groupoid $\GG$ over a smooth manifold $M$
we construct
the adjoint action of the \'etale Lie groupoid
$\GGG$ of germs of local bisections of $\GG$
on the Lie algebroid $\fg$ of $\GG$.
With this action, we form 
the associated convolution $\Cc(M)/\RR$-bialgebra
$\Cc(\GGG,\fg)$. We represent this $\Cc(M)/\RR$-bialgebra
in the algebra of transversal distributions on $\GG$.
This construction extends the Cartier-Gabriel
decomposition of the Hopf algebra
of distributions with finite support on a Lie group.
\end{abstract}

\maketitle

\section{Introduction}
Let $K$ be a Lie group and denote by $\cE'_{\text{fin}}(K)$ 
the Hopf algebra of distributions with finite support on $K$. 
The Lie algebra of primitive elements of $\cE'_{\text{fin}}(K)$ is 
isomorphic to the Lie algebra $\fk$ of $K$ and it generates a 
subalgebra of $\cE'_{\text{fin}}(K)$ which consists of all 
distributions supported at the unit of $K$. This 
subalgebra is in turn isomorphic to the universal enveloping algebra 
$\Univ(\fk)$ of $\fk$. On the other hand, the grouplike elements
of $\cE'_{\text{fin}}(K)$ correspond to Dirac measures on $K$
and generate a subalgebra of $\cE'_{\text{fin}}(K)$,
isomorphic to the group algebra $\RR K$ of the group $K$
equipped with the discrete topology.
The Hopf algebra
$\cE'_{\text{fin}}(K)$
has a natural Cartier-Gabriel decomposition as
the twisted tensor product
$K\ltimes \Univ(\fk)=\Univ(\fk)\ten\RR K$,
where $K$ acts on $\Univ(\fk)$ by the adjoint
representation~\cite{Car07}.

In this paper, we show how to extend this decomposition
to general Lie groupoids. For a Lie groupoid $\GG$ over a smooth
manifold $M$, we have the associated Lie algebroid $\fg$
with universal enveloping algebra
$\Univ(\fg)$,
which is a $\C(M)/\RR$-bialgebra~\cite{MoMrc10}.
Furthermore, we have the associated \'etale Lie groupoid
$\GGG$ of germs of local bisections of $\GG$ and the
Hopf algebroid $\Cc(\GGG)$ of functions with compact support
on $\GGG$~\cite{Mrc01}.
In Section~\ref{Convolution bialgebra of a Lie groupoid}
we show that we have an adjoint action of $\GGG$ on
the Lie algebroid $\fg$ as well as on the sheaf of germs of
the universal bialgebra $\Univ(\fg)$.
Using this adjoint action, we construct
the convolution $\Cc(M)/\RR$-bialgebra 
\[ \Cc(\GGG,\fg). \]
The elements of this bialgebra are suitable functions
with compact support on $\GGG$ with values in the sheaf of germs
of the universal algebra $\Univ(\fg)$.
Equivalently, we can describe $\Cc(\GGG,\fg)$
as the twisted tensor product
\[ \Univ(\fg)\ten_{\Cc(M)}\Cc(\GGG).  \]
In Section~\ref{Rep} we relate this bialgebra to
the algebra
$\cE'_{\trg}(\GG)$ of $\trg$-transversal
distributions on $\GG$ (see~\cite{ErYu19,LeMaVa17}).
Under the assumption that $\GG$ is Hausdorff and paracompact,
we construct a natural homomorphism of algebras
\[ \Phi_{\GG}:\Cc(\GGG,\fg)\to\cE'_{t}(\GG) \]
and explicitly compute its kernel. The distributions
in the image of this homomorphism have fiberwise
finite support.
If $\GG$ is a Lie group $K$, then the homomorphism
$\Phi_{K}$ is injective with 
$\text{Im}(\Phi_{K})=\cE'_{\text{fin}}(K)$ and gives
the isomorphism of the Cartier-Gabriel decomposition.
Furthermore, if $\EE$ is a Hausdorff paracompact
\'etale Lie groupoid,
then $\Phi_{\EE}$ is an isomorphism.

\section{Preliminaries}

Throughout this paper we will write
$M$ for a paracompact Hausdorff smooth manifold,
$\T(M)$ for its tangent bundle and
$\fX(M)$ for the Lie algebra of smooth vector fields on $M$.
We shall denote by $\C(M)$ the algebra of real smooth functions
on $M$ and by $\Cc(M)$ the algebra of real smooth
functions with compact support on $M$.

\subsection{Lie groupoids}
A {\em Lie groupoid\/} over $M$ is a
groupoid $\GG$ with objects $M$, equipped with a structure of a
smooth manifold (which may be non-paracompact non-Hausdorff)
such that all the groupoid structure maps of $\GG$ are smooth
and the source and the target
maps $\src,\trg:\GG\to M$ are submersions with paracompact
Hausdorff fibers.
For such a Lie groupoid $\GG$ and for any $x,y\in M$ we write
$\GG(x,-)=\src^{-1}(x)$, $\GG(-,y)=\trg^{-1}(y)$
and $\GG(x,y)=\GG(x,-)\cap\GG(-,y)$, and we
write $1_x\in\GG(x,x)$ for the unit arrow at $x$.
Furthermore, for any open subsets $U,V$ of $M$
we write $\GG(U,-)=\src^{-1}(U)$, $\GG(-,V)=\trg^{-1}(V)$
and $\GG(U,V)=\GG(U,-)\cap\GG(-,V)$. 
Note that $\GG(U,U)$ is an open Lie subgroupoid of $\GG$.
For any $g\in\GG(x,y)$ we have the left translation
$\Lt_{g}:\GG(-,x)\to\GG(-,y)$, $h\mapsto gh$,
and the right translation
$\Rt_g:\GG(y,-)\to\GG(x,-)$, $h\mapsto hg$.

Homomorphisms of Lie groupoids are smooth functors
between them.
A Lie groupoid
is {\em \'{e}tale\/} if all its structure maps 
are local diffeomorphisms.

For motivation, some historical remarks,
more details and many examples
of Lie groupoids, see~\cite{Mac05,MoMrc03,MoMrc05}.

\subsection{Local bisections of Lie groupoids}
Let $\GG$ be a Lie groupoid over $M$.
If $E$ is a subset of $\GG$ such
that $\src|_E$ is injective, we will
write $\alpha_E:\src(E)\to\GG$ for the assignment
determined by $\src\com\alpha_E=\id_{\src(E)}$ and
$\alpha_E(\src(E))=E$.
Similarly, if $E$ is a subset of $\GG$ such
that $\trg|_E$ is injective, we will
write $\beta_E:\trg(E)\to\GG$ for the assignment
determined by $\trg\com\beta_E=\id_{\trg(E)}$ and
$\beta_E(\trg(E))=E$.
A subset $E$ of $\GG$ is a {\em local bisection\/} of $\GG$
if both $\src|_E$ and $\trg|_E$ are injective,
the images
$\src(E)$ and $\trg(E)$ are open in $M$,
the maps $\alpha_E$ and $\beta_E$ are smooth and
the composition $\trg\com\alpha_E$ is a smooth open embedding
of smooth manifolds.
In particular, for any such local bisection $E$
we have the diffeomorphism 
$\tau_E:\src(E)\to\trg(E)$
satisfying $\alpha_E=\beta_E\com\tau_E$.
If $\alpha:U\to\GG$ is any smooth local section 
of the source map, defined on an open subset $U$ of $M$,
such that the composition $\trg\com\alpha$ is
a smooth open embedding of smooth manifolds,
then $\alpha(U)$ is a local bisection of $\GG$
with $\alpha_{\alpha(U)}=\alpha$.

The product of local bisections $E$ and $E'$ of $\GG$
is the local bisection
$E'\cdot E=\{g'g\,|\,g\in E,\,g'\in E',\, \src(g')=\trg(g)\}$
of $\GG$.
The inverse of a local bisection $E$ of $\GG$
is the local bisection $E^{-1}=\{g^{-1}\,|\,g\in E\}$ of $\GG$.

Local bisections $E$ and $E'$ of $\GG$ have the same
{\em germ\/} at an arrow $g\in E\cap E'$ if
there exists a local bisection $E''$ of $\GG$ such that
$g\in E''\subset E\cap E'$.
As usual, the {\em germ\/} of a local bisection $E$ of $\GG$ at $g\in E$,
denoted by $\germ_g(E)$, is the class of all local bisections
of $\GG$ through $g$ with the same germ at $g$
as the local bisection $E$.

The set of germs of all local bisections of $\GG$
has a natural structure of an \'{e}tale Lie groupoid
over $M$ (see~\cite{MoMrc03}), which we denote by
\[\GGG.\]
Indeed, for any local bisection $E$ of $\GG$
and $g\in E$ we define the source and the target
of $\germ_g(E)\in\GGG$ to be $\src(\germ_g(E))=\src(g)$
and $\trg(\germ_g(E))=\trg(g)$ respectively.
There is the obvious smooth structure
on $\GGG$ such that
the maps $\src,\trg:\GGG\to M$ are local diffeomorphisms.
The multiplication in $\GGG$ is induced by the multiplication
of local bisections. In particular, for any local bisections
$E$ and $E'$ of $\GG$ and for any $g\in E$ and $g'\in E'$ with
$\src(g')=\trg(g)$ we define
\[ (\germ_{g'}(E'))\cdot (\germ_g(E)) = \germ_{g'g}(E'\cdot E), \]
which yields
\[ (\germ_g(E))^{-1}= \germ_{g^{-1}}(E^{-1}).\]

For any local bisection $E$ of $\GG$ we have the associated
local bisection
\[ \EEE = \{ \germ_g(E) \,;\, g\in E\} \]
of $\GGG$, and the assignment $E\mapsto \EEE$ gives
a bijection between local bisections of $\GG$ and local
bisections of $\GGG$.

Note that we have a natural surjective
homomorphism of Lie groupoids over $M$
\[ \theta:\GGG\to\GG \]
which maps $\germ_g(E)$ to $g$.
This homomorphism
is an isomorphism if, and only if, the Lie groupoid
$\GG$ is \'etale.

If $\EE$ is an \'etale Lie groupoid, $E$ a local
bisection of $\EE$ and $e\in E$, then the germs
of the maps $\alpha_E$ and $\tau_E$ at $\src(e)$ depend only on $e$,
so we will write $\alpha_e=\germ_{\src(e)}\alpha_E$ and
$\tau_e=\germ_{\src(e)}\tau_E$. Similarly, we can also write
$\beta_e=\germ_{\trg(e)}\beta_E$.

\subsection{Lie algebroids}
A {\em Lie algebroid\/} over $M$
is a real smooth vector bundle $\fg\to M$ of finite rank,
equipped with a smooth map $\an:\fg\to \T(M)$
of vector bundles over $M$
and a Lie algebra structure on the
vector space $\G\fg$ of smooth global sections of $\fg$,
such that the induced map $\G(\an):\G\fg\to\fX(M)$
is a homomorphism of Lie algebras and
for any $X,Y\in\G\fg$ and any $f\in\C(M)$ we have
the {\em Leibniz identity\/}
\[ [X,fY]=f[X,Y]+\G(\an)(X)(f)Y. \]
For such a Lie algebroid $\fg$
and for any $X\in\G\fg$ and $f\in\C(M)$
we write $X(f)=\G(\an)(X)(f)$.
The map $\an$ is called the {\em anchor\/} of
the Lie algebroid $\fg$.
For more on Lie algebroids
see e.g.\ \cite{CdSW99,Mac05,MoMrc03,Pr67}.

For any Lie groupoid $\GG$ over $M$
we have an associated Lie algebroid over $M$,
which is defined as follows:
Write $\T^{\trg}(\GG)$ for the kernel of the derivative
of the target map of $\GG$. The pull-back
$\T_M^{\trg}(\GG)$ of the vector bundle $\T^{\trg}(\GG)$
along the unit map $M\to\GG$
has a natural structure of a Lie algebroid.
The anchor map of this Lie algebroid
is given by the restriction of the derivative of
the source map of $\GG$.
The sections of this Lie algebroid correspond
to the left invariant vector fields on $\GG$
(tangent to $\trg$-fibers),
and the Lie bracket of such sections is given by
the usual Lie bracket of the corresponding
left invariant vector fields.

Alternatively, we may consider
the kernel $\T^{\src}(\GG)$ of the derivative
of the source map and its pull-back
$\T_M^{\src}(\GG)$
along the unit map $M\to\GG$, which also
has a natural structure of a Lie algebroid,
with anchor given by the restriction of the
derivative of the target map.
The sections of this Lie algebroid correspond
to the right invariant vector fields on $\GG$
(tangent to $\src$-fibers),
and the Lie bracket of such sections is given by
the usual Lie bracket of the corresponding
right invariant vector fields.
Of course, the derivative of the inverse map of $\GG$
restricts to an isomorphism between
the Lie algebroids $\T_M^{\src}(\GG)$
and $\T_M^{\trg}(\GG)$.

In the literature, the model $\T_M^{\src}(\GG)$
is more common, except for the special case of
Lie groups, where it is standard to consider
the Lie algebra of left invariant vector fields.
It turns out that for the purpose of this paper
the model $\T_M^{\trg}(\GG)$ is more suitable, so
we will use this one.

\subsection{Lie-Rinehart algebras}
Let $\Bbbk$ be a field of characteristic $0$
and let $R$ be a commutative $\Bbbk$-algebra with local units.
Write $\Der_{\Bbbk}(R)$ for the Lie algebra of derivations on $R$.
A {\em $(\Bbbk,R)$-Lie algebra\/} is a Lie algebra
$L$ over $\Bbbk$ which is also a locally unital left $R$-module
and is equipped with a homomorphism of Lie algebras 
and left $R$-modules $\rho:L\to\Der_{\Bbbk}(R)$,
such that the Leibniz rule
\[ [X,rY]=r[X,Y]+\rho(X)(r)Y \]
holds for any $X,Y\in L$ and any $r\in R$.
For such a $(\Bbbk,R)$-Lie algebra $L$, the pair
$(R,L)$ is referred to as a {\em Lie-Rinehart algebra},
the homomorphism $\rho$ is called the {\em anchor},
and for any $X\in L$ and $r\in R$ we denote
$X(r)=\rho(X)(r)$.
For more on Lie-Rinehart algebras see
e.g.\ \cite{Her53,Hue04,Pal61,Rin63}.

If $\fg$ is a Lie algebroid over $M$,
then $\G\fg$ is an $(\RR,\C(M))$-Lie algebra.
In fact, the Serre-Swan theorem implies that
Lie algebroids over $M$ can be characterized
as the $(\RR,\C(M))$-Lie algebras which are
finitely generated and projective as $\C(M)$-modules.

\subsection{Bialgebras and Hopf algebroids}
Let $\Bbbk$ be a field of characteristic $0$
and let $R$ be a commutative $\Bbbk$-algebra with local units.
We say that a $\Bbbk$-algebra $A$ with local units
{\em extends\/} $R$ if $R$ is a subalgebra of $A$ and $A$
has local units in $R$.

Suppose that $A$ is a $\Bbbk$-algebra which extends $R$.
Then $A$ is an $R$-$R$-bimodule.
We shall write $A\ten_R A=A\ten_R^{ll} A$ for the tensor product
of left $R$-modules, which has two natural
right $R$-module structures.
We denote by $A\overline{\ten}_{R}A$ the left $R$-submodule
of $A\ten_R A$ consisting of those
elements of $A\ten_{R}A$ on
which both right $R$-actions coincide.
Note that $A\overline{\ten}_{R}A$ is a $\Bbbk$-algebra
with local units that extends $R$.

An {\em $R/\Bbbk$-bialgebra\/} is a $\Bbbk$-algebra $A$ 
which extends $R$,
equipped with a structure of a cocommutative
coalgebra in the category of left $R$-modules
(with comultiplication $\cm:A\to A\ten_R A$
and counit $\cu:A\to R$) such that
\begin{enumerate}
\item [(i)]   $\cm(A)\subset A\overline{\ten}_{R}A$,
\item [(ii)]  $\cu|_{R}=\id$,
\item [(iii)] $\cm|_{R}$ is the
              canonical embedding $R\subset A\ten_{R}A$,
\item [(iv)]  $\cu(ab)=\cu(a\cu(b))$ and
\item [(v)]   $\cm(ab)=\cm(a)\cm(b)$
\end{enumerate}
for any $a,b\in A$.

A {\em Hopf $R$-algebroid\/} is an $R/\Bbbk$-bialgebra $A$,
equipped with a $\Bbbk$-linear involution $S:A\to A$
(antipode) such that
\begin{enumerate}
\item [(vi)]   $S|_{R}=\id$,
\item [(vii)]  $S(ab)=S(b)S(a)$ for any $a,b\in A$ and
\item [(viii)] $\mu_{A}\com(S\ten\id)\com\cm=\cu\com S$,
\end{enumerate}
where $\mu_{A}$ denotes the multiplication in $A$.

Under different names and various generalities,
this type of structures have been studied
in~\cite{Nic85,NicWei82,Swe74,Tak77,Win74},
see also~\cite{Bo09,KalMrc13,
KowPos09,Lu96,Mal00,MoMrc10,Mrc01,Xu98}
and references therein.
Note that $\Bbbk/\Bbbk$-bialgebras are the usual
$\Bbbk$-bialgebras, often called just bialgebras.
In this paper, the name bialgebra will sometimes
be used
to refer to any general $R/\Bbbk$-bialgebra,
for simplicity of the exposition.

Let $A$ be an $R/\Bbbk$-bialgebra.
The {\em anchor\/} of $A$ is the homomorphism of $\Bbbk$-algebras
$\rho:A\to\text{End}_{\Bbbk}(R)$, defined by
$\rho(a)(r)=\cu(ar)$, for $a\in A$ and $r\in R$.
An element $a\in A$ is {\em primitive\/} if
\[ \cm(a)=\eta\ten a+a\ten\eta \]
for some $\eta\in R$ with $\eta a=a\eta=a$.
For any such primitive element $a$ we have
$\cu(a)=0$.
The set of primitive elements $\Prim(A)$ of $A$
is a left $R$-submodule of $A$ and has a natural
structure of a $(\Bbbk,R)$-Lie algebra~\cite{MoMrc10,Rin63},
with the restriction $\rho|_{\Prim(A)}$ as the anchor
and with the commutator as the Lie bracket.

\subsection{Universal enveloping algebra of a Lie-Rinehart algebra}
Let $\Bbbk$ be a field of characteristic $0$
and let $R$ be a commutative unital $\Bbbk$-algebra.
For any $(\Bbbk,R)$-Lie algebra $L$
we have the associated {\em universal enveloping algebra\/}
\[ \Univ(R,L), \]
which is a unital $\Bbbk$-algebra and is equipped with
a homomorphism of unital $\Bbbk$-algebras
$\iota_{R}:R\to\Univ(R,L)$
and a homomorphism of Lie algebras
$\iota_{L}:L\to\Univ(R,L)$ satisfying
$\iota_{R}(r)\iota_{L}(X)=\iota_{L}(rX)$
and $[\iota_{L}(X),\iota_{R}(r)]=\iota_{R}(X(r))$,
for any $r\in R$ and any $X\in L$
(see~\cite{NiWeXu99,Rin63}).

The algebra $\Univ(R,L)$
is determined by the
following universal property:
If $A$ is any unital $\Bbbk$-algebra,
$\kappa_{R}:R\to A$ any homomorphism of unital $\Bbbk$-algebras
and $\kappa_{L}:L\to A$ any homomorphism of Lie algebras
such that $\kappa_{R}(r)\kappa_{L}(X)=\kappa_{L}(rX)$
and $[\kappa_{L}(X),\kappa_{R}(r)]=\kappa_{R}(X(r))$
for any $r\in R$ and $X\in L$, then
there exists a unique homomorphism of
unital algebras $\kappa:\Univ(R,L)\to A$ such that
$\kappa\com \iota_{R}=\kappa_{R}$ and
$\kappa\com \iota_{L}=\kappa_{L}$.

By this universal property there is a unique
homomorphism of unital algebras
$\varrho:\Univ(R,L)\to \End_{\Bbbk}(R)$
such that $\varrho\com\iota_{L}=\rho$ and
$\varrho(\iota_{R}(r))(r')=rr'$ for all $r,r'\in R$.
The map $\iota_{R}$ is injective, hence we
identify $\iota_{R}(R)$ with $R$ and write simply
$\iota_{R}(r)=r$, for any $r\in R$.
In other words, the algebra $\Univ(R,L)$ extends $R$.
If $L$ is projective as a left $R$-module, then the map
$\iota_L$ is injective as well. For any $X\in L$ we
usually denote
$\iota_{L}(X)$ simply by $X$.
Using this notation, the algebra $\Univ(R,L)$
is generated by elements $X\in L$ and $r\in R$,
while the equalities $r\cdot X=rX$ and
$[X,r]=X\cdot r - r\cdot X=X(r)$ hold in $\Univ(R,L)$.

Furthermore, the universal enveloping algebra
$\Univ(R,L)$ is a filtered $\Bbbk$-algebra, with natural filtration
\[ \{0\}=\Univ^{(-1)}(R,L)\subset
   \Univ^{(0)}(R,L)\subset\Univ^{(1)}(R,L)
   \subset\Univ^{(2)}(R,L)\subset\cdots\;,\]
where $\Univ^{(0)}(R,L)=R$ and
$\Univ^{(n)}(R,L)$ is a vector subspace of $\Univ(R,L)$
spanned by $R$ and the
powers $\iota_L(L)^{m}$, $m=1,2,\ldots,n$.
The associated graded algebra
\[ \gr(\Univ(R,L))=\bigoplus_{n=0}^{\infty} \Univ^{(n)}(R,L)/\Univ^{(n-1)}(R,L) \]
is a commutative unital algebra over $R$.
If $L$ is projective as a left $R$-module,
this graded algebra is isomorphic to the symmetric algebra
of the $R$-module $L$,
by the Poincar\'{e}-Birkhoff-Witt theorem~\cite{Rin63}.

By the universal property of the algebra $\Univ(R,L)$,
there is a unique homomorphism of unital algebras
\[ \cm:\Univ(R,L)\to \Univ(R,L)\overline{\ten}_R\Univ(R,L) \]
such that $\cm(r)=1\ten r=r\ten 1$ and
$\cm(X)=1\ten X + X\ten 1$ for any $r\in R$ and $X\in L$.
This is a cocommutative coproduct on $\Univ(R,L)$
with counit
\[ \cu:\Univ(R,L)\to R \]
given by $\cm(u)=\varrho(u)(1)$ for any $u\in \Univ(R,L)$.
With this structure, the universal enveloping algebra
$\Univ(R,L)$ is an $R/\Bbbk$-bialgebra~\cite{MoMrc10}.

In particular, for any Lie algebroid
$\fg$ over $M$ we have the underlying
$(\RR,\C(M))$-Lie algebra $\G\fg$ with
the associated universal
enveloping $\C(M)/\RR$-bialgebra, which is denoted
simply by
\[ \Univ(\fg)=\Univ(\C(M),\Gamma\fg).\]

\subsection{Convolution bialgebra of an \'etale Lie groupoid}
Let $\EE$ be an \'etale Lie groupoid over $M$ and
let
\[ \Cc(\EE)=\Gc(\trg^{\ast}\mathcal{C}^\infty_M) \]
be the vector space of global
sections with compact support of the
pull-back of the $\EE$-sheaf $\mathcal{C}^\infty_M$ 
of germs of smooth real functions on $M$
along the target map
$\trg:\EE\to M$ (see~\cite{CraMo00}).
There is an associative convolution product
on $\Cc(\EE)$ given by
\[
(b'\cdot b)(e'')=\sum_{e''=e'e}b'(e')(e'\cdot b(e))
\]
for any $b,b'\in\Cc(\EE)$. Note that this sum is in fact finite
because $b$ and $b'$ are sections with compact support.
Furthermore, we have $b(e)\in (\mathcal{C}^\infty_M)_{\trg(e)}$,
so $e'\cdot b(e)\in (\mathcal{C}^\infty_M)_{\trg(e')}$
and the product $b'(e')(e'\cdot b(e))$ is defined in
the stalk $(\mathcal{C}^\infty_M)_{\trg(e'')}$.

An example of an element of $\Cc(\EE)$ is given by
a local bisection $E$ of $\EE$ and a smooth real function
$f\in \Cc(\trg(E))$: the associated
section $\langle f,E\rangle \in \Cc(\EE)$ is defined by
\[
\langle f,E\rangle (e)= \germ_{\trg(e)}f
\in (\mathcal{C}^\infty_M)_{\trg(e)}
\]
for all $e\in E$ end equals $0$ outside $E$.
By definition, elements of
$\Cc(\EE)$ are finite sums of sections of this form.
If $E'$ is another local bisection of $\GG$ and
$f'\in \Cc(\trg(E'))$, then the convolution product
is given by
\[ \langle f',E'\rangle\cdot\langle f,E\rangle
= \langle f' \cdot (f\com\tau^{-1}_{E'}) ,E'\cdot E\rangle .\]

Since we identify $M$ with the image of the
unit map $M\to\EE$, the algebra $\Cc(\EE)$ extends
the algebra $\Cc(M)$. In particular, the algebra $\Cc(\EE)$
is a $\Cc(M)$-$\Cc(M)$-bimodule.
The diagonal map $\EE\to\EE\times_M\EE$ induces
a coproduct map~\cite{Mrc07_2}
\[ \cm:\Cc(\EE) \to \Cc(\EE)\ten_{\Cc(M)}\Cc(\EE), \]
the target map $\EE\to M$ induces a counit map
\[ \cu:\Cc(\EE) \to \Cc(M) \]
and the inverse map $\EE\to\EE$ induces an antipode map
\[ S:\Cc(\EE) \to \Cc(\EE). \]
Explicitly, on the element
$\langle f,E\rangle$ as above
we have
\[ S(\langle f,E\rangle) = \langle f\com\tau_E,E^{-1}\rangle, \]
\[ \cu(\langle f,E\rangle)=f \]
and
\[ \cm(\langle f,E\rangle) =
   \langle f,E\rangle \ten \langle \eta ,E\rangle,\]
where $\eta$ is any function in $\Cc(\trg(E))$
which equals $1$
on a neighbourhood of the support of $f$.

With this structure, the algebra $\Cc(\EE)$
is a Hopf $\Cc(M)$-algebroid~\cite{Mrc01,Mrc07_1}.

\section{The convolution bialgebra of a Lie groupoid}\label{Convolution bialgebra of a Lie groupoid}

The aim of this section is to construct the convolution
$\Cc(M)/\RR$-bialgebra $\Cc(\GGG,\fg)$,
for any Lie groupoid $\GG$ over $M$
with its Lie algebroid $\fg$. We will in fact first construct
a $\Cc(M)/\RR$-bialgebra $\Cc(\EE,\fg)$ for any action
of an \'etale
Lie groupoid $\EE$ over $M$
on an arbitrary Lie algebroid $\fg\to M$.
The $\Cc(M)/\RR$-bialgebra $\Cc(\GGG,\fg)$ is then a special case
of this construction when applied to the adjoint $\GGG$-action,
which we also introduce.

\subsection{Actions of \'etale Lie groupoids on Lie algebroids}\label{Actions of etale Lie groupoids on Lie algebroids}
Let $\EE$ be an \'etale Lie groupoid over $M$
and let $\pi:\fg\to M$ be a Lie algebroid over $M$.
Note that for any open subset $U$ of $M$, the restriction
$\fg|_U=\pi^{-1}(U)$ is a Lie algebroid over $U$.

Let $\EE\times_M \fg \to\fg$ be
a left $\EE$-action on the vector
bundle $\pi:\fg\to M$.
For any local bisection $E$ of $\EE$
we have an isomorphism of vector bundles
$\tau_E^{\fg}:\fg|_{\src(E)}\to\fg|_{\trg(E)}$ over
$\tau_E: \src(E)\to\trg(E)$ given by
\[ \tau_E^{\fg}(\xi)= \alpha_{E}(\pi(\xi))\cdot \xi, \]
for any $\xi\in\fg|_{\src(E)}$.
If this isomorphism $\tau_E^{\fg}$ is in fact an isomorphism
of Lie algebroids over the diffeomorphism $\tau_E$,
for any local bisection $E$ of $\EE$,
then we say that the $\EE$-action respects the Lie algebroid
structure, or simply, that this $\EE$-action is an action
of the \'etale Lie groupoid $\EE$ on the
Lie algebroid $\fg$.

Assume that we have such an action
of the \'etale Lie groupoid $\EE$ on the
Lie algebroid $\fg$. For any local bisection $E$ of $\EE$,
the isomorphism $\tau_E^{\fg}$ induces an isomorphism
of Lie-Rinehart algebras
\[ \G(\tau_E^\fg): (\C(\src(E)),\G(\fg|_{\src(E)}))
   \to (\C(\trg(E)),\G(\fg|_{\trg(E)})) \]
and an isomorphism of universal enveloping bialgebras
\[ \Univ(\tau_E^{\fg}): \Univ(\fg|_{\src(E)})
   \to \Univ(\fg|_{\trg(E)}) \]
over the isomorphism of algebras
$\C(\tau_E^{-1}):\C(\src(E))\to\C(\trg(E))$.

Let $x$ be a point in $M$. The smooth real functions on $M$
with zero germ at $x$ form an ideal in
$\C(M)$, which we shall denote by $\N_x(\C(M))$. For any
left $\C(M)$-module $\mathcal{M}$ we consider the submodule
\begin{align*}
\N_x(\mathcal{M})
&= \N_x(\C(M))\mathcal{M} 
 = \{ fm \,;\, f\in\N_x(\C(M)),\, m\in\mathcal{M} \} \\
&= \{ m\in\mathcal{M}\,;\, fm=m \mbox{ for some }
      f\in\N_x(\C(M)) \} \\
&= \{ m\in\mathcal{M}\,;\, fm=0 \mbox{ for some }
      f\in\C(M)\mbox{ with } \germ_x(f)=1 \} 
\end{align*}
and write
\[ \mathcal{M}_x = \mathcal{M}/\N_x(\mathcal{M})\]
for the associated localization.
We will denote by $m_x$ the localization 
(or the germ) of $m\in \mathcal{M}$
at $x$, which is the image of $m$ along the quotient
map $\mathcal{M}\to\mathcal{M}_x$.
In particular, the localization
$\C(M)_x = \C(M)/\N_x(\C(M))$
is the algebra of germs of smooth real functions on $M$ at $x$,
and we have a natural isomorphism of left $\C(M)_x$-modules
$\mathcal{M}_x \cong \C(M)_x\ten_{\C(M)} \mathcal{M}$.
In the same way we can define a localization
of left $\Cc(M)$-modules and we can observe that
$\C(M)_x=\Cc(M)_x$.

We define the germ of the Lie algebroid $\fg$ at $x$
as the localization
\[ (\G\fg)_x. \]
The Lie algebroid structure of $\fg$ induces
an $(\RR,\C(M)_x)$-Lie algebra structure on the quotient
$(\G\fg)_x$: 

\begin{prop}
The germ $(\G\fg)_x$ of the Lie algebroid $\fg$ at $x$
is an $(\RR,\C(M)_x)$-Lie algebra.
Elements of $(\G\fg)_x$ are germs $X_x$ of smooth
sections $X$ of $\fg$ at $x$.
For any open neighbourhood $U$ of $x$ in $M$
we have $(\G\fg)_x=(\G\fg|_{U})_x$.
\end{prop}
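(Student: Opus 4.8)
The plan is to realize $(\G\fg)_x$ as the stalk at $x$ of the sheaf of smooth sections of $\fg$ and to transport the Lie--Rinehart operations from $\G\fg$ to the quotient, the only delicate point being that these operations descend along the localization map. First I would record the description of $\N_x(\G\fg)$. By definition $\N_x(\G\fg)=\N_x(\C(M))\G\fg$, and I claim that $X-X'\in\N_x(\G\fg)$ if and only if $X$ and $X'$ agree on some open neighbourhood of $x$. For the forward direction, write $X-X'=\sum_i f_iZ_i$ with $f_i\in\N_x(\C(M))$; each $f_i$ vanishes on a neighbourhood of $x$, so on the finite intersection of these neighbourhoods $X-X'$ vanishes. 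Conversely, if $X=X'$ on an open neighbourhood $U$ of $x$, I would choose $f\in\C(M)$ with $\germ_x(f)=0$ and $f\equiv 1$ outside $U$; then $f(X-X')=X-X'$ everywhere (both sides vanish on $U$ and agree off $U$), so $X-X'\in\N_x(\G\fg)$. This proves the middle assertion: elements of $(\G\fg)_x$ are exactly germs $X_x$, and, using a bump function supported near $x$ to extend a locally defined section, every germ at $x$ is represented by a global section. The $\C(M)_x$-module structure is then $f_x\cdot X_x=(fX)_x$, which is well defined because $f(X-X')$ and $(f-g)X'$ both lie in $\N_x(\G\fg)$ when $f-g\in\N_x(\C(M))$ and $X-X'\in\N_x(\G\fg)$, and the module is unital with unit the germ of the constant function $1$.

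The heart of the argument is that the $\RR$-bilinear bracket of $\G\fg$ descends to $(\G\fg)_x$, i.e.\ that $\N_x(\G\fg)$ is a Lie ideal: $[\N_x(\G\fg),\G\fg]\subset\N_x(\G\fg)$. For $f\in\N_x(\C(M))$ and $Z,Y\in\G\fg$, the Leibniz identity gives $[fZ,Y]=f[Z,Y]-Y(f)Z$. The first term lies in $\N_x(\G\fg)$ since $f\in\N_x(\C(M))$. For the second term I would use the key observation that the anchor preserves the ideal $\N_x(\C(M))$: if $f$ vanishes on a neighbourhood $U$ of $x$, then $Y(f)=\G(\an)(Y)(f)$, being a first-order operator applied to $f$, also vanishes on $U$, so $Y(f)\in\N_x(\C(M))$ and hence $Y(f)Z\in\N_x(\G\fg)$. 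Thus $[fZ,Y]\in\N_x(\G\fg)$, and by antisymmetry $\N_x(\G\fg)$ is a two-sided Lie ideal; the bracket $[X_x,Y_x]=[X,Y]_x$ is therefore well defined and inherits $\RR$-bilinearity, antisymmetry and the Jacobi identity from $\G\fg$. This is the step I expect to be the main obstacle, since it is where the Leibniz identity and the germ-preservation of derivations combine.

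It remains to produce the anchor, verify the axioms, and establish independence of $U$. Because the anchor preserves $\N_x(\C(M))$, each $Y\in\G\fg$ induces a derivation of $\C(M)_x$, and the resulting map $\rho_x\colon(\G\fg)_x\to\Der_{\RR}(\C(M)_x)$, $X_x\mapsto(f_x\mapsto X(f)_x)$, is well defined by the same computation; that $\rho_x$ is a homomorphism of Lie algebras and of $\C(M)_x$-modules, and that the Leibniz rule $[X_x,f_xY_x]=f_x[X_x,Y_x]+\rho_x(X_x)(f_x)Y_x$ holds, all follow by reducing the corresponding identities in $\G\fg$ modulo $\N_x$. This gives the first assertion. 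Finally, for an open neighbourhood $U$ of $x$, I would use that germs at $x$ depend only on a neighbourhood, so $\C(M)_x=\C(U)_x$ and restriction $X\mapsto X|_U$ induces a map $(\G\fg)_x\to(\G(\fg|_U))_x$; it is injective because a section vanishing near $x$ in $U$ vanishes near $x$, and surjective because any $Y\in\G(\fg|_U)$ multiplied by a bump function supported in $U$ and equal to $1$ near $x$ extends to a global section of $\fg$ with the same germ at $x$. Locality of the Lie algebroid bracket (a first-order bidifferential operator, so $[X,X']|_U$ depends only on $X|_U,X'|_U$) shows this bijection is an isomorphism of $(\RR,\C(M)_x)$-Lie algebras, giving $(\G\fg)_x=(\G(\fg|_U))_x$.
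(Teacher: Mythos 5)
Your proof is correct and follows essentially the same route as the paper's: the key step in both is showing that $\N_x(\G\fg)$ is a Lie ideal via the Leibniz identity together with the observation that the anchor, being local, sends functions with zero germ at $x$ to functions with zero germ at $x$. The paper phrases this using the single-product characterization $Y=fY$ with $f_x=0$ while you expand a general element as $\sum_i f_iZ_i$, but the computation is the same, and your additional verifications of the second and third assertions simply spell out what the paper leaves implicit.
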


\begin{proof}
We need to check that $\an(\N_x(\G\fg))\subset \N_x(\fX(M))$
and that $\N_x(\G\fg)$ is an ideal in
the Lie algebra $\G\fg$.

Take any $Y\in\N_x(\G\fg)$ and choose
a function $f\in\C(M)$ with $fY=Y$ and
$f_x=\germ_{x}(f)=0$.
Then we have
\[ \an(Y)=\an(fY)=f\an(Y)\in \N_x(\fX(M)). \]
Furthermore, for any
$X\in\G\fg$ we have
$X(f)_x=0$ and
\[ [X,Y]=[X,fY]=f[X,Y]+X(f)Y \in \N_x(\G\fg)\,.\qedhere \]
\end{proof}

For any local bisection $E$ of $\EE$ and
any $e\in E$, the isomorphism
$\G(\tau_E^\fg)$ induces an isomorphism 
of Lie-Rinehart algebras over $\C(\tau_e^{-1})$
\[ \G(\tau_E^\fg)_{\src(e)}:
   (\C(M)_{\src(e)},\G(\fg)_{\src(e)})
   \to
   (\C(M)_{\trg(e)},\G(\fg)_{\trg(e)}).
\]
Since the Lie groupoid $\EE$ is \'etale,
this isomorphism depends only on $e$ and not on
the choice of the local bisection $E$. We shall therefore
denote this isomorphism simply by
\[ (\G\fg)_{\src(e)} \to (\G\fg)_{\trg(e)},
\qquad X_{\src(e)}\mapsto e\cdot X_{\src(e)},\]
as an action of $\EE$ on the sheaf of germs of
sections of $\fg$.
We can summarize 
that the germs of sections of $\fg$ form
an $\EE$-sheaf of Lie-Rinehart algebras.

Similarly, we take the localization
\[ \Univ(\fg)_x \]
of the universal enveloping bialgebra.
The bialgebra structure of $\Univ(\fg)$
induces a $\C(M)_x/\RR$-bialgebra structure
on $\Univ(\fg)_x$: 

\begin{prop}
For any $x\in M$,
the localization $\Univ(\fg)_x$
of the universal enveloping bialgebra
$\Univ(\fg)$
is a $\C(M)_x/\RR$-bialgebra, and we have
a natural isomorphism
\[ \Univ(\fg)_x \to \Univ(\C(M)_x,(\G\fg)_x) \]
of $\C(M)_x/\RR$-bialgebras.
For any open neighbourhood $U$ of $x$ in $M$
we have $\Univ(\fg)_x=\Univ(\fg|_U)_x$. 
\end{prop}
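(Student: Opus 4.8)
The plan is to first upgrade the localization $\Univ(\fg)_x$ from a quotient left $\C(M)$-module to a quotient $\RR$-algebra, then to identify it with $\Univ(\C(M)_x,(\G\fg)_x)$ by a pair of mutually inverse homomorphisms coming from the two universal properties, and finally to transport the bialgebra structure across this identification.

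The substantive step is to check that $\N_x(\Univ(\fg))=\N_x(\C(M))\Univ(\fg)$ is a \emph{two-sided} ideal of $\Univ(\fg)$. This does not follow from commutative localization, because $\C(M)$ is not central in $\Univ(\fg)$: one has $[X,f]=X(f)$ for $X\in\G\fg$ and $f\in\C(M)$. That $\N_x(\Univ(\fg))$ is a right ideal is immediate. For the left ideal property I would show, by induction on the natural filtration $\Univ^{(n)}(\fg)$, that $uf\in\N_x(\Univ(\fg))$ whenever $u\in\Univ(\fg)$ and $f\in\N_x(\C(M))$. The case $u\in\C(M)=\Univ^{(0)}(\fg)$ holds because $\N_x(\C(M))$ is an ideal of $\C(M)$. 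Using $\RR$-linearity and the $\C(M)$-action, the inductive step reduces to $u=Xv$ with $X\in\G\fg$ and $v$ of lower filtration degree; writing $vf=\sum_i g_iw_i$ with $g_i\in\N_x(\C(M))$ and expanding $Xg_i=g_iX+X(g_i)$ gives $Xvf=\sum_i g_i(Xw_i)+\sum_i X(g_i)w_i$. The key point is that the anchor preserves the vanishing ideal: if $g_i$ vanishes near $x$ then so does $X(g_i)$, so $X(g_i)\in\N_x(\C(M))$ and both sums lie in $\N_x(\Univ(\fg))$. Hence $\Univ(\fg)_x$ is an $\RR$-algebra extending $\C(M)_x$.

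The identification is then formal. The quotient maps $\C(M)\to\C(M)_x$ and $\G\fg\to(\G\fg)_x$ composed with the canonical inclusions into $\Univ(\C(M)_x,(\G\fg)_x)$ are compatible with the Lie--Rinehart relations, so the universal property of $\Univ(\fg)=\Univ(\C(M),\G\fg)$ produces an algebra homomorphism $\kappa:\Univ(\fg)\to\Univ(\C(M)_x,(\G\fg)_x)$; since $\kappa$ sends each $f\in\N_x(\C(M))$ to $f_x=0$, it annihilates $\N_x(\Univ(\fg))$ and descends to $\bar\kappa:\Univ(\fg)_x\to\Univ(\C(M)_x,(\G\fg)_x)$. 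In the other direction, the composites $\C(M)\to\Univ(\fg)\to\Univ(\fg)_x$ and $\G\fg\to\Univ(\fg)\to\Univ(\fg)_x$ factor through $\C(M)_x$ and $(\G\fg)_x$, and the defining relations hold in $\Univ(\fg)_x$ as localizations of the corresponding relations in $\Univ(\fg)$; the universal property of $\Univ(\C(M)_x,(\G\fg)_x)$ yields $\lambda$ in the reverse direction. Checking on the algebra generators $[f]$ and $[X]$ gives $\lambda\com\bar\kappa=\id$ and $\bar\kappa\com\lambda=\id$, so $\bar\kappa$ is an isomorphism of $\RR$-algebras, and it is canonical, hence natural.

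Finally, $\Univ(\C(M)_x,(\G\fg)_x)$ carries its canonical $\C(M)_x/\RR$-bialgebra structure, because $\C(M)_x$ is unital and $(\G\fg)_x$ is an $(\RR,\C(M)_x)$-Lie algebra by the previous proposition. Transporting it along $\bar\kappa$ makes $\Univ(\fg)_x$ a $\C(M)_x/\RR$-bialgebra and $\bar\kappa$ a bialgebra isomorphism; evaluating on the generators, where $\cm$ returns $[f]\ten 1$ and $[X]\ten 1+1\ten[X]$, shows this agrees with the structure induced by localizing the coproduct and counit of $\Univ(\fg)$. For the last assertion, the isomorphism just established, applied to the Lie algebroid $\fg|_U$ over $U$, gives $\Univ(\fg|_U)_x\cong\Univ(\C(U)_x,(\G\fg|_U)_x)$; since germs at $x$ depend only on a neighbourhood, $\C(U)_x=\C(M)_x$, and $(\G\fg|_U)_x=(\G\fg)_x$ by the previous proposition, so both sides are canonically identified with $\Univ(\C(M)_x,(\G\fg)_x)$. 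The main obstacle is the two-sided ideal verification; everything after it is manipulation of universal properties and transport of structure.
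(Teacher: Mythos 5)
Your proposal is correct and follows essentially the same route as the paper: an induction on the PBW filtration using the identity $[X,f]=X(f)$ and the locality of the anchor to show that $\N_x(\Univ(\fg))$ is a two-sided ideal, followed by the two universal properties to produce mutually inverse homomorphisms that are checked on generators. The only cosmetic difference is that the paper runs the induction directly on $\Univ^{(n)}\cdot\N_x(\Univ(\fg))\subset\N_x(\Univ(\fg))$, whereas you prove $uf\in\N_x(\Univ(\fg))$ and combine this with the (immediate) right-ideal property; the key computation is identical.
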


\begin{proof}
Let us abbreviate
$\Univ=\Univ(\fg)$.

(i) First we show by induction on $n\in\{0,1,2,\ldots\}$ that
\[ \Univ^{(n)} \cdot \N_x(\Univ) \subset \N_x(\Univ). \]

Take any $u\in\N_x(\Univ)$ and choose
$f\in\C(M)$ such that $fu=u$ and $f_x=0$.
For
any $f'\in\C(M)=\Univ^{(0)}$ we have
$f'u=f'(fu)=f(f'u)\in\N_x(\Univ)$,
so
\[ \Univ^{(0)} \cdot \N_x(\Univ) \subset \N_x(\Univ). \]
For any $X\in\G\fg$ we have
$X(f)_x=0$ and
$Xu = X (fu)= (fX+X(f))u\in\N_x(\Univ)$,
which shows that
\[ (\G\fg) \cdot \N_x(\Univ)\subset \N_x(\Univ).\]
Now assume that 
$\Univ^{(n)} \cdot \N_x(\Univ) \subset \N_x(\Univ)$
for some $n\in\{0,1,2,\ldots\}$. Then we have
\[ (\G\fg)\cdot \Univ^{(n)} \cdot \N_x(\Univ)
\subset (\G\fg)\cdot \N_x(\Univ) \subset \N_x(\Univ),
\]
and this yields
\[ \Univ^{(n+1)} \cdot \N_x(\Univ) \subset \N_x(\Univ).\]

(ii) Part (i) implies that
\[ \Univ \cdot \N_x(\Univ) \subset \N_x(\Univ), \]
so $\N_x(\Univ)$ is a left ideal in $\Univ$.
We can easily see that
$\N_x(\Univ)$ is in fact a two-sided ideal
in $\Univ$ and that
$\Univ_x$
is a $\C(M)_x/\RR$-bialgebra.

(iii)
Let us further abbreviate $L=\G\fg$ and $R=\C(M)$.
The natural embeddings $\iota_R:R\to\Univ$ and
$\iota_L:L\to\Univ$ induce linear maps
\[ \kappa_{R_x}:R_x\to \Univ_x,\qquad f_x
   =f+\N_x(R)\mapsto f+\N_x(\Univ), \]
and
\[ \kappa_{L_x}:L_x\to \Univ_x,\qquad X_x
   =X+\N_x(L)\mapsto X+\N_x(\Univ). \]
Using the fact that $\N_x(\Univ)$ is an ideal in $\Univ$,
it is straightforward to check that 
$\kappa_{R_x}$ is a homomorphism of algebras,
$\kappa_{L_x}$ is a homomorphism of Lie algebras,
$\kappa_{R_x}(f_x)\kappa_{L_x}(X_x)=\kappa_{L_x}(f_x X_x)$ and
$[\kappa_{L_x}(X_x),\kappa_{R_x}(f_x)]=\kappa_{R_x}(X_x (f_x))$.
By the universal property, the maps $\kappa_{R_x}$ and
$\kappa_{L_x}$ determine a homomorphism of algebras
\[ \kappa: \Univ(R_x,L_x) \to \Univ_x. \]

(iv)
We use the universal property to define a homomorphism
of algebras
\[ \psi: \Univ \to \Univ(R_x,L_x), \]
determined by the homomorphism of algebras
\[ \psi_R:R\to \Univ(R_x,L_x),\qquad f\mapsto f_x\,,\]
and the homomorphism of Lie algebras
\[ \psi_L:L\to \Univ(R_x,L_x),\qquad X\mapsto X_x\,.\]
Again, it is straightforward to check
$\psi_R(f)\psi_L(X)=\psi_L(fX)$ and
$[\psi_L(X),\psi_R(f)]=\psi_R(X(f))$, so the
algebra homomorphism $\psi$ is well defined.
This homomorphism induces an algebra homomorphism
\[ \Univ_x \to \Univ(R_x,L_x), \]
which is the inverse to the homomorphism $\kappa$
defined in part (iii). Indeed, we have
\[
\psi(\N_x(\Univ))
=  \psi(\N_x(R)\cdot\Univ)
\subset \psi(\N_x(R))\cdot\psi(\Univ)
\subset \{0\}\cdot \psi(\Univ)=\{0\}\,.\qedhere
\]
\end{proof}

For any local bisection $E$ of $\EE$ and
any $e\in \EE$, the isomorphism
$\Univ(\tau_E^\fg)$ induces an isomorphism 
of bialgebras over $\C(\tau_e^{-1})$
\[ \Univ(\tau_E^\fg)_{\src(e)}:
   \Univ(\fg|_{\src(E)})_{\src(e)}
   \to
   \Univ(\fg|_{\trg(E)})_{\trg(e)}.
\]
Since the Lie groupoid $\EE$ is \'etale,
this isomorphism
depends only on $e$ and not on
the choice of the local bisection $E$,
so we will write
\[ \Univ(\tau_E^\fg)_{\src(e)}(u_{\src(e)})
   = e\cdot u_{\src(e)}, \]
for any
$u\in\Univ(\fg|_{\src(E)})$.
By the previous proposition, this isomorphism
gives us
an isomorphism of universal enveloping bialgebras
\[ \Univ(\C(M)_{\src(e)},(\G\fg)_{\src(e)})
   \to
   \Univ(\C(M)_{\trg(e)},(\G\fg)_{\trg(e)}),
\]
which maps
$X_{\src(e)}\mapsto e\cdot X_{\src(e)}$
and $f_{\src(e)}\mapsto f_{\src(e)}\com\tau_e^{-1}$,
for any $X\in\G\fg$ and any $f\in\C(M)$.

We may naturally equip the space
\[ \cS(\fg) = \coprod_{x\in M} \Univ(\fg)_x \]
with a structure of a sheaf over $M$, and we just
demonstrated that $\cS(\fg)$ is actually an $\EE$-sheaf
of bialgebras. This sheaf is filtered by $\EE$-subsheaves
of modules
\[ 
\cS^{(0)}(\fg)\subset
\cS^{(1)}(\fg)\subset
\cS^{(2)}(\fg)\subset \cdots,\]
where
\[ \cS^{(n)}(\fg)
    = \coprod_{x\in M} \Univ^{(n)}(\fg)_x.\]
Note that $\cS^{(0)}(\fg)=\mathcal{C}^\infty_M$.
By the Poincar\'{e}-Birkhoff-Witt theorem it follows that 
\[ 
\Univ^{(n)}(\fg)=\G\cS^{(n)}(\fg),
\]
while
\[ 
\Univ(\fg)=\lim_{\to n}\G\cS^{(n)}(\fg).
\]

\subsection{The bialgebra of an action of $\EE$ on $\fg$}\label{Bialgebra of an action}
Let $\EE$ be an \'etale Lie groupoid over $M$ which acts
on a Lie algebroid $\fg$ over $M$, as in the previous subsection.
For any $n\in\{0,1,2,\ldots\}$ denote
\[ \Cc(\EE,\fg)^{(n)}=\Gc(\trg^{\ast}\cS^{(n)}(\fg)) \]
and write
\[ \Cc(\EE,\fg) = \lim_{\to n} \Cc(\EE,\fg)^{(n)}. \]
We define a convolution product on the
$\Cc(M)$-module $\Cc(\EE,\fg)$ as follows:
for any $a,a'\in\Cc(\EE,\fg)$ and any $e''\in \EE$ we define
\[ (a'\cdot a)(e'')
   = \sum_{e''=e'e} a'(e') (e'\cdot a(e)).
\]
This sum is finite
because $a$ and $a'$ are sections with compact support,
we have $a(e)\in \cS(\fg)_{\trg(e)}$,
so $e'\cdot a(e)\in \cS(\fg)_{\trg(e')}$
and the product $a'(e')(e'\cdot a(e))$ is defined in
the stalk $\cS(\fg)_{\trg(e'')}$.
With this product, the $\Cc(M)$-module $\Cc(\EE,\fg)$
becomes an algebra with local units 
that extends $\Cc(M)$. Furthermore, we have
$\Cc(M)\subset \Cc(\EE)\subset\Cc(\EE,\fg)$.

An example of an element of $\Cc(\EE,\fg)$ is given by
a local bisection $E$ of $\EE$ and an element
$u\in\Univ(\fg)$ with compact support $\supp(u)$
in $\trg(E)$:
the associated
section $\langle u,E\rangle \in \Cc(\EE,\fg)$ is defined by
\[
\langle u,E\rangle (e)= u_{\trg(e)}
\]
for all $e\in E$ end equals $0$ outside $E$.
By definition, elements of
$\Cc(\EE,\fg)$ are finite sums of sections of this form.
If $E'$ is another local bisection of $\GG$ and
$u'\in\Univ(\fg)$ has compact support in $\trg(E')$,
then the convolution product
is given by
\[ \langle u',E'\rangle\cdot\langle u,E\rangle
= \langle u' \cdot (\Univ(\tau^\fg_{E'})(u)) ,E'\cdot E\rangle .\]
Note that this expression makes sense.
Indeed, while the element $\Univ(\tau^\fg_{E'})(u)$ is
not globally defined,
the product $u' \cdot (\Univ(\tau^\fg_{E'})(u))$
has compact support in $\trg(E'\cdot E)$ and can be extended by zero
to a global element of $\Univ(\fg)$. 

The algebra $\Cc(\EE,\fg)$ is generated by elements of the form
$\langle f,E\rangle$ and $\langle X,E\rangle$,
where $E$ is a local bisection of $\EE$ and where $f\in\C(M)$ and
$X\in\G\fg$ both have compact support in $\trg(E)$.
If $E'$ is another local bisection of $\EE$ and
$f'\in\C(M)$ and
$X'\in\G\fg$ both have compact support in $\trg(E')$, we have
\begin{align*}
\langle f',E'\rangle \cdot \langle f,E\rangle
&= \langle f'\cdot (f\com\tau_{E'}^{-1}),E'\cdot E \rangle,\\
\langle X',E'\rangle \cdot \langle f,E\rangle
&= \langle X'\cdot (f\com\tau_{E'}^{-1}),E'\cdot E \rangle,\\
\langle f',E'\rangle \cdot \langle X,E\rangle
&= \langle f'\cdot \G(\tau_{E'}^\fg)(X),E'\cdot E \rangle,\\
\langle X',E'\rangle \cdot \langle X,E\rangle
&= \langle X'\cdot \G(\tau_{E'}^\fg)(X),E'\cdot E \rangle.
\end{align*}

The vector space $\Cc(\EE,\fg)$ can be represented
as the tensor product
\[ \Cc(\EE,\fg) \cong \Univ(\fg) \ten_{\Cc(M)} \Cc(\GGG), \]
see~\cite{GrHaVa78}.
By this isomorphism, the element
$\langle u,E\rangle \in \Cc(\EE,\fg)$ as above corresponds to the
tensor
$u\ten \langle \eta,E\rangle$,
where $\eta\in \Cc(\trg(E))$ is any function which equals $1$ on
a neighbourhood of the support of $u$. 

We may check that $\Cc(\EE,\fg)$
is a $\Cc(M)/\RR$-bialgebra. 
The counit and the comultiplication
on $\Cc(\EE,\fg)$ are given by
\[ \cu(\langle u,E\rangle) = \cu(u) \]
and
\[ \cm(\langle u,E\rangle) = \sum_i
\langle u^{(1)}_i,E\rangle \ten \langle u^{(2)}_i,E\rangle, \]
where $\cm(u)= \sum_i u^{(1)}_i \ten u^{(2)}_i$
is the coproduct of $u$ in $\Univ(\fg)$ and
$u^{(1)}_i, u^{(2)}_i\in\Univ(\fg)$ all have
compact support in $\trg(E)$.

The elements of the form
$\langle X,M\rangle\in\Cc(\EE,\fg)$ as above
are primitive and generate, together with
$\Cc(M)\subset\Cc(\EE,\fg)$, the image
of the algebra embedding
$\Univ_{c,M}(\fg)=\Cc(M)\cdot\Univ(\fg)\to\Cc(\EE,\fg)$,
$u\mapsto \langle u,M\rangle$.
With these primitive elements one can reconstruct
the Lie algebroid $\fg$~\cite{MoMrc10}.
On the other hand, the elements of the form
$\langle f,E\rangle\in\Cc(\EE,\fg)$ as above
are weakly grouplike~\cite{Mrc07_2} and generate the subalgebra
$\Cc(\EE)\subset\Cc(\EE,\fg)$.
These weakly grouplike elements can be used to
reconstruct the sheaf $\trg:\EE\to M$~\cite{Mrc07_2}.
To reconstruct the Lie groupoid structure on $\EE$
one would in general need the presence
of the antipode~\cite{Mrc07_1}.

A special case of this construction, for an action
of an \'etale Lie groupoid on
a bundle of Lie algebras,
was given in~\cite{KalMrc13}.
In this case, the resulting bialgebra is in fact
a Hopf $\Cc(M)$-algebroid.

\subsection{The adjoint action and the convolution bialgebra}\label{The adjoint action}
Let $\GG$ be a Lie groupoid and let 
$\fg$ be the Lie algebroid 
$\pi:\T_M^{\trg}(\GG)\to M$
associated to $\GG$.
We will now show that the \'etale Lie groupoid
$\GGG$ of germs of local bisections of $\GG$ acts naturally
on the Lie algebroid $\fg$ by the adjoint representation.
Consequently, we obtain the convolution
$\Cc(M)/\RR$-bialgebra
\[ \Cc(\GGG,\fg) \]
of the Lie groupoid $\GG$.

To construct the adjoint action of $\GGG$ on $\fg$, note
that any local bisection $E$ of $\GG$ induces a diffeomorphism
$\Lt_E: \GG(-,\src(E))\to \GG(-,\trg(E))$ given by left translation,
\[ \Lt_E(h)=\alpha_E(\trg(h))\cdot h, \qquad h\in\GG(-,\src(E)), \]
as well as a diffeomorphism
$\Rt_E: \GG(\trg(E),-)\to \GG(\src(E),-)$ given by right translation,
\[ \Rt_E(h)=h\cdot \beta_E(\src(h)), \qquad h\in\GG(\trg(E),-). \]
The conjugation diffeomorphism
$\Conj_E:\GG(\src(E),\src(E))\to\GG(\trg(E),\trg(E))$
is given by
\begin{align*}
\Conj_E(h)
&= \Rt_{E^{-1}}(\Lt_E(h)) = \Lt_E(\Rt_{E^{-1}}(h))
 = \Rt_{E}^{-1}(\Lt_E(h)) = \Lt_E(\Rt_{E}^{-1}(h)) \\
&= \alpha_E(\trg(h))\cdot h \cdot \beta_{E^{-1}}(\src(h))
 = \alpha_E(\trg(h))\cdot h \cdot (\alpha_{E}(\src(h)))^{-1},
\end{align*} 
for any $h\in\GG(\src(E),\src(E))$.

Observe that the conjugation
$\Conj_E:\GG(\src(E),\src(E))\to\GG(\trg(E),\trg(E))$
is an isomorphism of Lie groupoids
over $\tau_E$. This implies that the derivative
$d\Conj_E$ restricts to an isomorphism
between the associated Lie algebroids.
However, the Lie algebroids associated to
the Lie groupoids $\GG(\src(E),\src(E))$ and
$\GG(\trg(E),\trg(E))$ are naturally isomorphic to the
restrictions $\fg|_{\src(E)}$ respectively 
$\fg|_{\trg(E)}$. The derivative $d\Conj_E$
therefore gives us an isomorphism of Lie algebroids
from $\fg|_{\src(E)}$ to
$\fg|_{\trg(E)}$ over $\tau_E$, which we shall
denote by
\[ \Ad_E : \fg|_{\src(E)} \to \fg|_{\trg(E)}. \]
We shall denote the induced isomorphism
of Lie algebras $\G(\fg|_{\src(E)}) \to \G(\fg|_{\trg(E)})$
again by $\Ad_E$. If $X\in\G\fg$ has
compact support in $\src(E)$, then $\Ad_E(X|_{\src(E)})$
has compact support in $\trg(E)$ and we shall write
\[ \Ad_E(X)\in\G\fg \]
for its extension by zero to all of $M$.

For any arrow $g\in E$,
the derivative of $\Lt_E$ at any arrow $h\in\GG(-,\src(g))$,
\[ (d\Lt_E)|_h : \T_h(\GG(-,\src(E)))\to \T_{gh}(\GG(-,\trg(E))), \]
and the derivative of $\Rt_E$ at any arrow $k\in\GG(\trg(g),-)$,
\[ (d\Rt_E)|_k : \T_k(\GG(\trg(E),-))\to \T_{kg}(\GG(\src(E),-)), \]
depend only on the germ of the local bisection $E$ at $g$.
This means that for any $e\in\GGG$, any
$h\in \GG(-,\src(e))$ and any $k\in\GG(\trg(e),-)$
we have isomorphisms
\[ \T_h(\GG)\to \T_{\theta(e)h}(\GG), 
\qquad \xi\mapsto d\Lt_{e}(\xi)=(d\Lt_{E})_h(\xi) \]
and
\[ \T_k(\GG)\to \T_{k\theta(e)}(\GG), 
\qquad \zeta\mapsto d\Rt_{e}(\xi)=(d\Rt_{E})_k(\zeta), \]
where $E$ is any local bisection of $\GG$ with
$\theta(e)\in E$ and $\germ_{\theta(e)}(E)=e$.

Furthermore, for any $e\in\GGG$ and any
$\xi\in\fg$ with $\pi(\xi)=\src(e)$, we can write
\[ \Ad_e(\xi)= d\Lt_{e}(d\Rt_{e}(\xi))
    =d\Rt_{e}(d\Lt_{e}(\xi)) = e\cdot \xi.\]
This defines an action of $\GGG$ on the vector bundle
$\fg$ over $M$, which we call the {\em adjoint action}.
This is indeed an action of $\GGG$ on the Lie algebroid $\fg$,
because for any local bisection $E$ of $\GG$, any $g\in E$
and any $\xi\in\fg$ with $\pi(\xi)=\src(g)$ we have
\[ \Ad_{\germ_g(E)}(\xi) = \Ad_E(\xi) .\]

Now that we have the adjoint action of $\GGG$ on $\fg$,
we obtain the convolution
$\Cc(M)/\RR$-bialgebra
\[ \Cc(\GGG,\fg) \]
of the Lie groupoid $\GG$,
as constructed in
Subsection~\ref{Bialgebra of an action}.
Recall that $\Cc(\GGG,\fg)$
is generated, as
an abelian group, by the elements of the form
\[ \langle u,\EEE \rangle, \]
where $E$ is a local bisection of $\GG$
and $u\in\Univ(\fg)$
has compact support in $\trg(E)$.

\section{Representation of the convolution bialgebra}\label{Rep}

From now on, we will assume that
$\GG$ is a Hausdorff paracompact Lie groupoid over $M$,
and we will write $\fg$ for the Lie algebroid 
$\T_M^{\trg}(\GG)$ associated to $\GG$. Recall that
we have the adjoint action of $\GGG$ on $\fg$.
In this section, we will represent the convolution
bialgebra of $\GG$ in the algebra of
$\trg$-transversal distributions on $\GG$.

\subsection{Left invariant differential operators}
Let $U$ be an open subset of $M$, and
consider the algebra $\End(\C(\GG(U,-)))$
of linear endomorphisms
of the vector space $\C(\GG(U,-))$.
For any $f\in\C(U)$,
the multiplication with the smooth function
$f\com\src|_{\GG(U,-)}$
gives us an element $\bar{f}\in\End(\C(\GG(U,-)))$.
Furthermore, for any $X\in\Gamma(\fg|_U)$,
the corresponding
left invariant vector field
$\bar{X}$ on $\GG(U,-)$, given by
$\bar{X}|_g=d\Lt_{g}(X|_{\src(g)})$ for all $g\in\GG(U,-)$,
is a partial differential 
operator on $\GG(U,-)$ of order $1$
and is therefore an element of $\End(\C(\GG(U,-)))$.
By the universal property of $\Univ(\fg|_U)$,
the maps
$\C(U)\to\End(\C(\GG(U,-)))$, $f\to\bar{f}$, and
$\Gamma(\fg|_U)\to\End(\C(\GG(U,-)))$, $X\mapsto\bar{X}$,
extend to a homomorphism of algebras 
\[ \Op=\Op_U:\Univ(\fg|_U)\to\End(\C(\GG(U,-))). \]
Endomorphisms in the image
\[ \Diff_U(\GG)=\Op_U(\Univ(\fg|_U)) \]
are linear partial
differential operators on $\GG(U,-)$ which are all
tangential to the
fibers of the target map $\trg$ and are left invariant.
In particular, this means that any operator
$D\in \Diff_U(\GG)$ can be described
as a smooth family of differential operators
$(D|_{x}:\C(\GG(U,x))\to\C(\GG(U,x)))_{x\in M}$
and the equality
\[ D|_{\src(g)}\com (\Lt_{g}^U)^{\ast}
   =(\Lt_{g}^U)^{\ast}\com D|_{\trg(g)} \]
holds for every $g\in\GG(U,-)$, where 
$(\Lt_{g}^U)^{\ast}:\C(\GG(U,\trg(g)))\to \C(\GG(U,\src(g)))$ is
the isomorphism induced by the restriction
of the left translation
diffeomorphism $\Lt_{g}$ to $\GG(U,\src(g))$.

Since differential operators are local,
the evaluation map
\[\Diff_U(\GG)\times\C(\GG(U,-))\to\C(\GG(U,-))\]
induces an action on the algebra of germs,
\[
\Diff_U(\GG)\times\C(\GG)_{g}\to\C(\GG)_{g},
\qquad (D,F_g)\mapsto D(F_g)=D(F)_g,
\]
for any $g\in\GG(U,-)$.
Furthermore, for any
open subset $W\subset\GG(U,-)$
we also have the induced action on $\C(W)$,
\[
\Diff_U(\GG)\times\C(W) \to\C(W),
\qquad (D,F)\mapsto D(F).
\]
Note that for any operator $D\in\Diff_U(\GG)$,
any local bisection $E$ of $\GG$
and any smooth function
$F\in\C(\GG(U,\trg(E)))$ we have the equality
\[ D (F \com \Lt_E|_{\GG(U,\src(E))})
   = D(F) \com \Lt_E|_{\GG(U,\src(E))}
   \in \C(\GG(U,\src(E))). \]

In~\cite{NiWeXu99}, the authors prove that
$\Op_M$ is a monomorphism and that its image 
\[ \Diff(\GG)=\Diff_M(\GG) \]
is precisely the algebra
of all linear partial differential operators on
the manifold $\GG$ which are tangential to the
fibers of the target map $\trg$ and are left invariant.
In particular, if we apply this result to
the Lie groupoid $\GG(U,U)$, it follows that the homomorphism
\[ \Univ(\fg|_U) \to \Diff(\GG(U,U)) \]
is an isomorphism.
Since any differential operator $D\in \Diff_{U}(\GG)$
is local and therefore acts also on $\GG(U,U)$,
we have the natural homomorphism of algebras
\[ \Diff_{U}(\GG) \to \Diff(\GG(U,U)). \]
By the left invariance, this homomorphism is in fact
an isomorphism of algebras. This yields that the homomorphism
\[ \Op_U:\Univ(\fg|_U)\to\Diff_U(\GG) \]
is also an isomorphism of algebras.

To simplify the notation, we will identify
the algebra $\Cc(U)$  with the subalgebra
of $\C(M)$, which consists of the smooth functions
on $M$ with compact support in $U$.
The elements of $\Univ(\fg)$ with compact support
in $U$ form the subalgebra
\[ \Univ_{c,U}(\fg) = \Cc(U) \cdot \Univ(\fg) \]
of $\Univ(\fg)$. The corresponding differential operators
on $\GG$ form the subalgebra
\[ \Diff_{c,U}(\GG)=
   \Op(\Univ_{c,U}(\fg))
   = \Cc(U)\cdot \Diff(\GG) \]
of $\Diff(\GG)$.
Note that we can identify
\[\Univ_{c,U}(\fg)=
  \Univ_{c,U}(\fg|_U)
  = \Cc(U)\cdot \Univ(\fg|_U)\]
and
\[ \Diff_{c,U}(\GG)=
\Diff_{c,U}(\GG(U,U))=\Cc(U)\cdot \Diff_{U}(\GG).
\]

The algebra isomorphism
$\Op:\Univ(\fg)\to\Diff(\GG)$
is also an isomorphism of left $\C(M)$-modules and
it induces the algebra isomorphism of localizations
\[ \Op_x:\Univ(\fg)_x\to\Diff(\GG)_x, \]
for any $x\in M$.

\begin{prop}\label{Action of U on smooth functions on G}
For any $x\in M$ and any $g\in\GG(x,-)$, the
evaluation map $\Diff(\GG)\times\C(\GG)\to\C(\GG)$
induces an action 
\[
\Univ(\fg)_{x}\times\C(\GG)_{g}\to\C(\GG)_{g}.
\]
We shall denote this action as
\[ (u_x,F_g)\mapsto u_x[F_g]=(\Op(u)(F))_g, \]
for any $u\in \Univ(\fg)$ and
any $F\in\C(\GG)$.
\end{prop}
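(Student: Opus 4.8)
The plan is to show that the prescription $(u_x, F_g) \mapsto (\Op(u)(F))_g$ descends to a well-defined $\RR$-bilinear map $\Univ(\fg)_x \times \C(\GG)_g \to \C(\GG)_g$, and then to observe that it is automatically an action of the algebra $\Univ(\fg)_x$. There are two independences to verify. Independence of the chosen representative $F$ of the germ $F_g$ is immediate from the fact, already recorded above, that every $\Op(u) \in \Diff(\GG)$ is a left invariant differential operator and hence local: if $F$ and $F'$ agree on a neighbourhood of $g$, then so do $\Op(u)(F)$ and $\Op(u)(F')$, whence $(\Op(u)(F))_g = (\Op(u)(F'))_g$.

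The substantive point is independence of the representative $u$ of the germ $u_x$, i.e.\ that $u \in \N_x(\Univ(\fg))$ forces $\Op(u)(F)$ to have zero germ at $g$ for every $F \in \C(\GG)$. Here I would invoke the characterization of $\N_x$ from the Preliminaries: such a $u$ satisfies $u = fu$ for some $f \in \C(M)$ with $\germ_x(f) = 0$, so $f$ vanishes on an open neighbourhood $V$ of $x$ in $M$. Since $\Op$ is a homomorphism of algebras and it sends $f \in \C(M) \subset \Univ(\fg)$ to multiplication by $f\com\src$, we obtain $\Op(u)(F) = (f\com\src)\cdot\Op(u)(F)$. The key observation --- the bridge between germs at $x \in M$ and germs at $g \in \GG$ --- is that $\src(g) = x \in V$, so $f\com\src$ vanishes on the open set $\src^{-1}(V) \subset \GG$, which is a neighbourhood of $g$. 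Hence $\Op(u)(F)$ vanishes on $\src^{-1}(V)$ and $(\Op(u)(F))_g = 0$, as required. This single step, resolved by the explicit form of $\Op$ on functions together with the continuity of $\src$, is the only genuine obstacle; everything else is formal.

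With well-definedness established, the module axioms follow from $\Op$ being an algebra homomorphism: bilinearity is clear, $(uv)_x[F_g] = (\Op(u)\Op(v)(F))_g = u_x[(\Op(v)(F))_g] = u_x[v_x[F_g]]$, and a local unit $\eta_x \in \C(M)_x$ (with $\eta = 1$ near $x$) acts as the identity because $\eta\com\src = 1$ near $g$. Equivalently, one may note that the map factors through the localization isomorphism $\Op_x : \Univ(\fg)_x \to \Diff(\GG)_x$ recorded just before the statement, reducing the claim to the evident local action of germs of $\trg$-tangential differential operators on germs of functions, with the same neighbourhood argument supplying well-definedness.
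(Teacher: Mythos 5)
Your proof is correct and follows essentially the same route as the paper: locality of the operators in $\Diff(\GG)$ handles the germ of $F$, and for the germ of $u$ both arguments pick $f\in\N_x(\C(M))$ with $fu=u$ (equivalently $\bar{f}D=D$ for $D=\Op(u)$) and use that $\src(g)=x$ forces $(f\com\src)_g=0$, hence $(\Op(u)(F))_g=0$. The only difference is cosmetic — the paper computes directly in the stalk $\C(\GG)_g$ while you name the neighbourhood $\src^{-1}(V)$ explicitly — and your closing verification of the module axioms is a harmless addition.
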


\begin{proof}
We already noted that we have the induced action of
$\Diff(\GG)$ on $\C(\GG)_{g}$.
It is now enough to show that for any
$D\in\N_x(\Diff(\GG))$ and any $F\in\C(\GG)$
we have $D(F)_g=0$. To this end, choose a function
$f\in\C(M)$ such that $\bar{f} D=D$ and $f_x=0$.
It follows that $(f\com \src)_g=0$ and
\[
D(F)_g=((\bar{f}D)(F))_g=(f\com\src)_g D(F)_g=0.\qedhere
\]
\end{proof}

Let $E$ be a local bisection of $\GG$. Recall that
the conjugation isomorphism of Lie groupoids
$\Conj_E:\GG(\src(E),\src(E))\to\GG(\trg(E),\trg(E))$
induces the isomorphism of Lie algebroids
$\Ad_E:\fg|_{\src(E)}\to \fg|_{\trg(E)}$
(see Subsection~\ref{The adjoint action}),
which in turn induces the isomorphism of bialgebras
\[ \Univ(\Ad_E):
   \Univ(\fg|_{\src(E)})
   \to
   \Univ(\fg|_{\trg(E)}) \]
over $\C(\tau_E^{-1})$ (see
Subsection~\ref{Actions of etale Lie groupoids on Lie algebroids}).
On the other hand, the isomorphism
$\Conj_E$ also induces an isomorphism of
algebras
\[ \Diff(\Conj_E):
   \Diff(\GG(\src(E),\src(E)))
   \to
   \Diff(\GG(\trg(E),\trg(E))) \]
such that 
the diagram of isomorphisms
\[
\xymatrix@C+18pt{
\Univ(\fg|_{\src(E)})
   \ar[r]^{\Univ(\Ad_E)} \ar[d]_{\Op}
&
\Univ(\fg|_{\trg(E)})
   \ar[d]^{\Op} \\
\Diff(\GG(\src(E),\src(E)))
   \ar[r]^{\Diff(\Conj_E)}
&
\Diff(\GG(\trg(E),\trg(E)))
}
\]
commutes. Explicitly, for any 
$D\in\Diff(\GG(\src(E),\src(E)))$ and any
function
$F\in\C(\GG(\trg(E),\trg(E)))$ we have
\[ \Diff(\Conj_E)(D)(F)\com\Conj_{E}
   =D(F\com\Conj_E).
\]

The isomorphism $\Diff(\Conj_E)$
can be in fact extended, by left invariance,
to the algebra $\Diff_{\src(E)}(\GG)$.
Define a map
\[ \overline{\Ad}_{E}:
\End(\C(\GG(\src(E),-))) \to \End(\C(\GG(\trg(E),-)))
\]
by
\[
\overline{\Ad}_E(D)(F)
=
D(F\com \Rt_E^{-1}) \com \Rt_E,
\]
for any $D\in\End(\C(\GG(\src(E),-)))$
and any $F\in\C(\GG(\trg(E),-))$.
It is clear that
$\overline{\Ad}_E$ is an isomorphism of algebras
with inverse $\overline{\Ad}_{E^{-1}}$.
Furthermore:

\begin{prop}
For any local bisection $E$ of $\GG$,
the diagram
\[
\xymatrix{
\Univ(\fg|_{\src(E)})
   \ar[r]^{\Univ(\Ad_E)} \ar[d]_{\Op}
&
\Univ(\fg|_{\trg(E)})
   \ar[d]^{\Op} \\
\End(\C(\GG(\src(E),-)))
   \ar[r]^{\overline{\Ad}_E}
&
\End(\C(\GG(\trg(E),-)))
}
\]
commutes. In particular we have
$\overline{\Ad}_E(\Diff_{\src(E)}(\GG))=\Diff_{\trg(E)}(\GG)$.
\end{prop}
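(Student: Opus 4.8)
The plan is to check commutativity on a generating set and extend by multiplicativity. All three maps in the square are algebra homomorphisms: $\Op$ by its defining universal property, $\Univ(\Ad_E)$ because it is induced by the Lie algebroid isomorphism $\Ad_E$, and $\overline{\Ad}_E$ because it is an algebra isomorphism (conjugation by the invertible pullback along $\Rt_E$), as noted just before the statement. Since $\Univ(\fg|_{\src(E)})$ is generated by $\C(\src(E))$ and $\G(\fg|_{\src(E)})$, it suffices to verify that the two composites agree on $f\in\C(\src(E))$ and on $X\in\G(\fg|_{\src(E)})$. Here I use that $\Univ(\Ad_E)$ restricts to $f\mapsto f\com\tau_E^{-1}$ on functions and to $\Ad_E$ on sections.

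For the degree-zero generators the verification is a direct computation resting on the identity $\src\com\Rt_E=\tau_E^{-1}\com\src$ on $\GG(\trg(E),-)$, which follows from $\Rt_E(g)=g\cdot\beta_E(\src(g))$ together with $\src(\beta_E(\src(g)))=\tau_E^{-1}(\src(g))$. Along the top and right edges, $\Op(\Univ(\Ad_E)(f))=\Op(f\com\tau_E^{-1})$ is multiplication by $(f\com\tau_E^{-1})\com\src$; down and then along the bottom, $\overline{\Ad}_E(\bar f)(F)=\bigl((f\com\src)\cdot(F\com\Rt_E^{-1})\bigr)\com\Rt_E=(f\com\tau_E^{-1}\com\src)\cdot F$, again multiplication by $(f\com\tau_E^{-1})\com\src$. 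So the square commutes on $\C(\src(E))$.

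The degree-one generators are the heart of the matter: I must show $\overline{\Ad}_E(\bar X)=\Op(\Ad_E X)$ as operators on $\C(\GG(\trg(E),-))$. First I would check that $\overline{\Ad}_E(\bar X)$ again lies in $\Diff_{\trg(E)}(\GG)$. It is a $\trg$-tangent vector field killing constants, since $\Rt_E$ preserves $\trg$-fibres and $\bar X$ is $\trg$-tangent. It is left invariant because right and left translations commute, $\Lt_B\com\Rt_E=\Rt_E\com\Lt_B$ for any left-translating local bisection $B$ by associativity, while $\bar X$ is left invariant; concretely $(F\com\Lt_B)\com\Rt_E^{-1}=(F\com\Rt_E^{-1})\com\Lt_B$ gives $\overline{\Ad}_E(\bar X)(F\com\Lt_B)=\overline{\Ad}_E(\bar X)(F)\com\Lt_B$. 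As left-invariant $\trg$-tangent operators are determined by their restriction to $\GG(\trg(E),\trg(E))$ through the isomorphism $\Diff_{\trg(E)}(\GG)\cong\Diff(\GG(\trg(E),\trg(E)))$, and $\Op(\Ad_E X)$ is also such an operator, it remains to compare the two on $\GG(\trg(E),\trg(E))$.

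On the unit-restricted groupoid I reduce conjugation by $\Rt_E$ to conjugation by $\Conj_E$, where the analogous square is already established. Writing $\Conj_E=\Rt_E^{-1}\com\Lt_E$, so that $\Conj_E^{-1}=\Lt_E^{-1}\com\Rt_E$ and hence $\Lt_E\com\Conj_E^{-1}=\Rt_E$, and using the left invariance $\bar X(G\com\Lt_E)=\bar X(G)\com\Lt_E$ with $G=F\com\Rt_E^{-1}$, one computes for $F\in\C(\GG(\trg(E),\trg(E)))$
\[
\Diff(\Conj_E)(\bar X)(F)=\bar X(F\com\Conj_E)\com\Conj_E^{-1}
=\bar X(F\com\Rt_E^{-1})\com\Lt_E\com\Conj_E^{-1}
=\bar X(F\com\Rt_E^{-1})\com\Rt_E,
\]
which is exactly $\overline{\Ad}_E(\bar X)(F)$. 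By the commuting $\Conj_E$-square established above and $\Univ(\Ad_E)(X)=\Ad_E X$, we have $\Diff(\Conj_E)(\bar X)=\Op(\Ad_E X)$ on $\GG(\trg(E),\trg(E))$, so $\overline{\Ad}_E(\bar X)$ and $\Op(\Ad_E X)$ agree there and hence everywhere, finishing the degree-one case. The final assertion $\overline{\Ad}_E(\Diff_{\src(E)}(\GG))=\Diff_{\trg(E)}(\GG)$ is then immediate from $\overline{\Ad}_E\com\Op=\Op\com\Univ(\Ad_E)$ and the surjectivity of $\Univ(\Ad_E)$. The step I expect to be the main obstacle is this degree-one reduction: keeping the various source- and target-restricted submanifolds straight and passing from conjugation by $\Rt_E$ to conjugation by $\Conj_E$, a move that is unlocked precisely by the identity $\Lt_E\com\Conj_E^{-1}=\Rt_E$ together with the left invariance of $\bar X$.
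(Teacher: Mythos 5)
Your proof is correct and takes essentially the same route as the paper: check the identity on the generators $f\in\C(\src(E))$ and $X\in\G(\fg|_{\src(E)})$, with an identical computation for functions based on $\src\com\Rt_E=\tau_E^{-1}\com\src$, and, for sections, the observation that $\overline{\Ad}_E(\bar X)$ is again a left-invariant $\trg$-tangent vector field so that its comparison with $\Op(\Ad_E(X))$ reduces to the unit section. The only (harmless) variation is in that last step: the paper evaluates the pushforward $(\Rt_E^{-1})_\ast(\bar X)$ at the units $1_{\tau_E(x)}$ and identifies it directly with $d\Conj_E(X|_{x})=(\Ad_E(X))|_{\tau_E(x)}$, whereas you invoke the previously stated $\Diff(\Conj_E)$-square on $\GG(\trg(E),\trg(E))$ via the identity $\Lt_E\com\Conj_E^{-1}=\Rt_E$ and left invariance.
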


\begin{proof}
It is enough to check the equality
$\overline{\Ad}_E\com\Op =\Op\com\Univ(\Ad_E)$
on all
$f\in\C(\src(E))$ and $X\in\G(\fg|_{\src(E)})$.

For any smooth function $F\in\C(\GG(\trg(E),-))$
and any $g\in\GG(\trg(E),-)$
we have
\begin{align*}
\overline{\Ad}_E(\Omega(f))(F)(g)
&=\overline{\Ad}_E(\bar{f})(F)(g)
=(\bar{f}(F\com \Rt_E^{-1}) \com \Rt_E )(g) \\
&=(((f\com\src)(F\com\Rt_E^{-1})) \com \Rt_E )(g) \\
&=(f\com\src\com\Rt_E)(g)(F\com\Rt_E^{-1} \com \Rt_E)(g) \\
&=f(\src(g \beta_E(\src(g)))) F(g)
=f(\src(\beta_E(\src(g)))) F(g) \\
&=(f\com\tau_E^{-1})(\src(g)) F(g)
= (((f\com\tau_E^{-1})\com\src) F )(g) \\
&= \Omega(f\com\tau_E^{-1})(F)(g)
= \Omega(\Univ(\Ad_E)(f))(F)(g).
\end{align*}
Furthermore, we compute
\begin{align*}
\overline{\Ad}_E(\Op(X))(F)(g)
&= \overline{\Ad}_E(\bar{X})(F)(g)
=(\bar{X}(F\com \Rt_E^{-1})\com\Rt_E)(g) \\
&=(\Rt_E^{-1})_\ast(\bar{X}) (F)(g),
\end{align*} 
so $\overline{\Ad}_E(\Op(X))$ 
is the vector field
$(\Rt_E^{-1})_\ast(\bar{X})$
corresponding to $\bar{X}$
along the diffeomorphism $\Rt_E^{-1}$.
The vector field $\overline{\Ad}_E(\Op(X))$
is tangent to the $\trg$-fibers
and is left invariant, because
the right translations preserve
the $\trg$-fibers and commute with
the left translations.

For any $x\in\src(E)$, 
the value of the vector field
$(\Rt_E^{-1})_\ast(\bar{X})$ at $1_{\tau_E(x)}$
equals to
\begin{align*}
(\Rt_E^{-1})_\ast(\bar{X})|_{1_{\tau_E(x)}}
&= d(\Rt_E^{-1})(\bar{X}|_{\Rt_E(1_{\tau_E(x)})}) 
= d(\Rt_E^{-1})(\bar{X}|_{\alpha_E(x)}) \\
&= d(\Rt_E^{-1})(d(\Lt_E)(\bar{X}|_{1_x}))
= d(\Conj_E)(X|_{x}) \\
&= (\Ad_E(X))|_{\tau_E(x)}.
\end{align*} 
This means that the left invariant vector fields
$\overline{\Ad}_E(\Omega(X))$ and $\Omega(\Ad_E(X))$
agree on $\trg(E)\subset \GG$, so they must be equal
on $\GG(\trg(E),-)$.
\end{proof}

As a consequence,
the isomorphism $\overline{\Ad}_E$ also gives us
an isomorphism
\[
\overline{\Ad}_E:
\Diff_{c,\src(E)}(\GG)
\to
\Diff_{c,\trg(E)}(\GG)
\]
such that the diagram of isomorphisms
\[
\xymatrix{
\Univ_{c,\src(E)}(\fg)
   \ar[r]^{\Univ(\Ad_E)} \ar[d]_{\Op}
&
\Univ_{c,\trg(E)}(\fg)
   \ar[d]^{\Op} \\
\Diff_{c,\src(E)}(\GG)
   \ar[r]^{\overline{\Ad}_E}
&
\Diff_{c,\trg(E)}(\GG)
}
\]
commutes.

\subsection{Transversal distributions}
Let $N$ be a paracompact Hausdorff smooth manifold.
Recall that a distribution on $N$ is a
linear functional on $\C(N)$ which is continuous
with respect to the Fr\'{e}chet topology.
We will write $\cE'(N)$ for the vector space of
distributions on $N$.

Let $p:N\to M$ be a smooth surjective submersion.
We have the 
induced action of $\Cc(M)$ on $\C(N)$, defined by
$f\cdot F=(f\com p)F$,
for $f\in\Cc(M)$ and $F\in\C(N)$.
A $p$-transversal distribution on $N$ is then
defined to be a $\Cc(M)$-linear continuous map
$\C(N)\to\Cc(M)$, where 
we equip the spaces $\C(N)$ and $\Cc(M)$
with the Fr\'{e}chet topology respectively the $LF$-topology.
We will denote the vector space of $p$-transversal 
distributions by
\[ \cE'_{p}(N).\]

Any $p$-transversal distribution
$T\in\cE'_{p}(N)$ defines
a family $(T|_{x})_{x\in M}$ of distributions
$T|_{x}\in\cE'(p^{-1}(x))$,
which is smooth in the sense that the function
$x\mapsto T|_{x}(F|_{p^{-1}(x)})$ on $M$
is smooth
for every $F\in\C(N)$.
One can define the support of a transversal distribution 
similarly as in the ordinary case.

\begin{ex} \rm
(1)
Let $Q$ be a paracompact Hausdorff
manifold and let
$\pr:M\times Q\to M$ be the projection.
Then we can identify the space $\cE'_{\pr}(M\times Q)$
with the space $\Cc(M,\cE'(Q))$
of smooth, compactly supported $\cE'(Q)$-valued
functions on $M$. Since $\cE'(Q)$ is a complete locally
convex space, a function $T:M\to\cE'(Q)$ is smooth
if and only if the function $\varphi\com T:M\to\RR$
is smooth for every continuous linear functional
$\varphi:\cE'(Q)\to\RR$ (see~\cite{KrMi97} for details). 

(2)
For any local bisection $E$ of $\GG$ and any function
$f\in\C(M)$ with compact support in $\src(E)$ we define
a $\trg$-transversal distribution
$\llbracket E,f\rrbracket \in\cE'_{\trg}(\GG)$ so that
for any $F\in\C(\GG)$, the function
$\llbracket E,f\rrbracket(F)$ has compact support in $\trg(E)$
and
\[
\llbracket E,f\rrbracket(F)(x)=f(\tau_E^{-1}(x))F(\beta_{E}(x)),
\qquad  x\in \trg(E).
\]
The distribution $\llbracket E,f\rrbracket$ can be seen
as a  smooth family of Dirac measures defined at the points of $E$
and weighted by the function $f\com\tau_E^{-1}$.
Explicitly, we have
\[
\llbracket E,f\rrbracket|_{x}=f(\tau_E^{-1}(x))\delta_{\beta_{E}(x)},
   \qquad x\in \trg(E).
\]
The support of $\llbracket E,f\rrbracket$ is contained in $E$.

(3) 
Let $E$ be a local bisection of $\GG$ 
and let $D\in\Diff_{c,\src(E)}(\GG)$.
We define the distribution
$\llbracket E,D\rrbracket\in\cE'_{\trg}(\GG)$
so that for any $F\in\C(\GG)$,
the function
$\llbracket E,D\rrbracket(F)$ has compact support
in $\trg(E)$ and
\[
\llbracket E,D\rrbracket(F)(x)=D(F)(\beta_{E}(x)),
\qquad  x\in \trg(E).
\]
This transversal distribution computes
$\trg$-vertical derivatives 
of a function $F$
along the local section $E$.
We can describe this transversal distribution
as a smooth family of distributions along
the fibers of $\trg$ as follows:
For any $x\in M$ and $g\in \GG(-,x)$ we denote by
$\text{ev}_{g}:\C(\GG)\to\RR$ the evaluation map
at the point $g$. Since $D$ is tangential to the fibers
of $\trg$,
it restricts to an endomorphism of $\C(\GG(-,x))$,
and the composition
$D|_{g}=\text{ev}_{g}\com D:\C(\GG(-,x))\to\RR$
belongs to $\cE'(\GG(-,x))$. Using this notation,
we can now express
\[
\llbracket E,D\rrbracket |_{x}=D|_{\beta_{E}(x)},
\qquad x\in \trg(E).
\]
\end{ex}

It turns out that there is a natural product
of $\trg$-transversal distributions which makes
\[ \cE'_{\trg}(\GG) \]
into an algebra
(see~\cite{LeMaVa17}).
In fact, in~\cite{LeMaVa17} (see also~\cite{ErYu19}),
the authors constructed three versions
of distributional algebras on $\GG$:
the algebras $\cE'_{\trg}(\GG)$,
$\cE'_{\src}(\GG)$ and $\cE'_{\trg,\src}(\GG)$
of $\trg$-transversal, $\src$-transversal
and bitransversal distributions, respectively.
For our purposes, the algebra $\cE'_{\trg}(\GG)$
will be most suitable. 

Let us recall the definition of the product
$T'\ast T$ of distributions $T,T'\in\cE'_{\trg}(\GG)$:
For any function $F\in\C(\GG)$ and any arrow
$g\in\GG$ we have the function
$F\com \Lt_{g}\in\C(\GG(-,\src(g)))$,
and it turns out that the function
$\GG\to\RR$, $g\mapsto T|_{\src(g)}(F\com \Lt_{g})$,
is smooth. The distribution
$T'\ast T\in \cE'_{\trg}(\GG)$ is given by
\[
(T'\ast T)(F)(x)=
  T'\left(g\mapsto T|_{\src(g)}(F\com \Lt_{g})\right)(x),
\]
for any $F\in\C(\GG)$ and any $x\in M$.

\begin{prop}\label{Prop-product-of-basic-distributions}
For any local bisections $E$ and $E'$ of the Lie groupoid $\GG$,
any $D\in\Diff_{c,\src(E)}(\GG)$ and any
$D'\in\Diff_{c,\src(E')}(\GG)$ we have
\[  \llbracket E',D'\rrbracket \ast \llbracket E,D\rrbracket
= \llbracket E'\cdot E,\overline{\Ad}_{E^{-1}}(D') D\rrbracket. \]
\end{prop}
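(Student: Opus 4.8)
The plan is to prove the identity by evaluating both distributions on an arbitrary test function $F\in\C(\GG)$ at an arbitrary point $x\in M$ and matching the results directly from the definitions of the convolution product and of the basic distributions $\llbracket\,\cdot\,,\cdot\,\rrbracket$. I abbreviate $T=\llbracket E,D\rrbracket$ and $T'=\llbracket E',D'\rrbracket$, and I write $H_F$ for the inner function appearing in the product, $H_F(g)=T|_{\src(g)}(F\com\Lt_g)$, so that $(T'\ast T)(F)(x)=T'(H_F)(x)$. The smoothness of $H_F$ is granted by the general fact recorded before the product was defined, so $H_F\in\C(\GG)$ and the expression $T'(H_F)$ makes sense.

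The first and most delicate step is to compute $H_F$ explicitly. Since $T|_{y}=D|_{\beta_E(y)}=\mathrm{ev}_{\beta_E(y)}\com D$ for $y\in\trg(E)$ and vanishes otherwise, for $g$ with $\src(g)\in\trg(E)$ one has $H_F(g)=\mathrm{ev}_{\beta_E(\src(g))}\bigl(D(F\com\Lt_g)\bigr)$. Here I would invoke the single-arrow form of the left invariance of $D$, namely $D|_{\src(g)}\com(\Lt_g)^{\ast}=(\Lt_g)^{\ast}\com D|_{\trg(g)}$, which gives $D|_{\src(g)}(F\com\Lt_g)=(D|_{\trg(g)}(F))\com\Lt_g$ on the fibre over $\src(g)$. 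Evaluating at $\beta_E(\src(g))$ and using $\Lt_g(\beta_E(\src(g)))=g\cdot\beta_E(\src(g))=\Rt_E(g)$ then yields
\[ H_F(g)=D(F)(\Rt_E(g)),\qquad g\in\GG(\trg(E),-), \]
with $H_F=0$ elsewhere; in other words $H_F=D(F)\com\Rt_E$ on $\GG(\trg(E),-)$, which (using that the coefficients of $D$ are compactly supported in $\src(E)$) has compact source-support in $\trg(E)$.

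The second step feeds this into $T'$: by definition $T'(H_F)(x)=D'(H_F)(\beta_{E'}(x))$ for $x\in\trg(E')$. Writing $G=D(F)$ and using the defining property of the adjoint twist, $\overline{\Ad}_{E^{-1}}(D')(\Phi)\com\Rt_E=D'(\Phi\com\Rt_E)$ (which follows from $\Rt_{E^{-1}}=\Rt_E^{-1}$), I rewrite $D'(G\com\Rt_E)=\overline{\Ad}_{E^{-1}}(D')(G)\com\Rt_E$, so that
\[ (T'\ast T)(F)(x)=\overline{\Ad}_{E^{-1}}(D')(G)\bigl(\Rt_E(\beta_{E'}(x))\bigr). \]
The final geometric input is the identity $\Rt_E(\beta_{E'}(x))=\beta_{E'}(x)\cdot\beta_E(\tau_{E'}^{-1}(x))=\beta_{E'\cdot E}(x)$, valid exactly when $\tau_{E'}^{-1}(x)\in\trg(E)$, i.e. when $x\in\trg(E'\cdot E)$, outside of which both sides vanish. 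Substituting $G=D(F)$ and reading $\overline{\Ad}_{E^{-1}}(D')(D(F))=(\overline{\Ad}_{E^{-1}}(D')D)(F)$ as the action of the composite operator, I obtain $(\overline{\Ad}_{E^{-1}}(D')D)(F)(\beta_{E'\cdot E}(x))$, which is precisely $\llbracket E'\cdot E,\overline{\Ad}_{E^{-1}}(D')D\rrbracket(F)(x)$.

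The verifications I would defer as routine are the support and domain bookkeeping: that $D'$ restricts (by locality, via its coefficients supported in $\src(E')$) to an operator on $\GG(\trg(E),-)$ so that $\overline{\Ad}_{E^{-1}}(D')$ is defined, that the composite $\overline{\Ad}_{E^{-1}}(D')D$ lands in $\Diff_{c,\src(E'\cdot E)}(\GG)$ so the right-hand distribution is legitimate, and that the two vanishing loci agree. The conceptual heart, and the step I expect to require the most care, is the first one: extracting $H_F(g)=D(F)(\Rt_E(g))$ from the left invariance of $D$ together with the fibrewise description $T|_y=D|_{\beta_E(y)}$. It is exactly here that the two translations $\Lt$ and $\Rt$ must be kept straight and where the adjoint twist $\overline{\Ad}_{E^{-1}}$ is forced to appear.
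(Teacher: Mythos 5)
Your proposal is correct and follows essentially the same route as the paper's proof: compute the inner function $g\mapsto T|_{\src(g)}(F\com\Lt_g)$ as $D(F)\com\Rt_E$ using the left invariance of $D$ and the identity $\Lt_g(\beta_E(\src(g)))=\Rt_E(g)$, then apply $T'$, insert the adjoint twist via $D'(G\com\Rt_E)=\overline{\Ad}_{E^{-1}}(D')(G)\com\Rt_E$, and conclude with $\Rt_E(\beta_{E'}(x))=\beta_{E'\cdot E}(x)$. The deferred bookkeeping (supports, domains) matches what the paper likewise leaves implicit.
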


\begin{proof}
Let us abbreviate $T=\llbracket E,D\rrbracket$
and $T'=\llbracket E',D'\rrbracket$. 
For any function $F\in\C(\GG)$ and any $x\in \trg(E'\cdot E)$ we have
\begin{align*}
(T'\ast T)(F)(x)
&=T' \left(g\mapsto T|_{\src(g)}(F\com \Lt_{g})\right)(x) \\
&=T' \left(g\mapsto D(F\com L_{g})(\beta_E(\src(g))) \right)(x) \\
&=T' \left(g\mapsto (D(F) \com L_{g})
     (\beta_E(\src(g))) \right)(x) \\
&=T' \left(g\mapsto (D(F)\com\Rt_E\com\Rt_E^{-1}\com L_{g})
     (\beta_E(\src(g))) \right)(x) \\
&=T' \left(g\mapsto (D(F)\com\Rt_E)(g) \right)(x) \\
&=D'(D(F)\com\Rt_E)(\beta_{E'}(x)) \\
&=\left(\overline{\Ad}_{E^{-1}}(D')(D(F))\com\Rt_E\right)
  (\beta_{E'}(x)) \\
&=\overline{\Ad}_{E^{-1}}(D')(D(F))(\beta_{E'\cdot E}(x)) \\
&=\llbracket E'\cdot E,\overline{\Ad}_{E^{-1}}(D') D\rrbracket (x)
\end{align*}
because $D$ is left invariant
and  $\Rt_E^{-1}(L_g(\beta_E(\src(g))))=g$.
\end{proof}

\subsection{Representation of the convolution bialgebra by distributions}
In the rest of this section we will construct
a representation of the algebra
$\Cc(\GGG,\fg)$ in the algebra of distributions
$\cE'_{\trg}(\GG)$.

We define a map
\[ \Phi=\Phi_{\GG}:\Cc(\GGG,\fg)\to\cE'_{t}(\GG) \]
by the formula
\[
\Phi(a)(F)(x)=
\!\!\!\sum_{e\in\GGG(-,x)}\!\!\!
(e^{-1}\cdot a(e))[F_{\theta(e)}](\theta(e)),
\]
for any $a\in\Cc(\GGG,\fg)$, any $F\in\C(\GG)$ and any $x\in M$.
It is clear that $\Phi(a)(F)(x)$
is well-defined and linear in $a$.
The fact that $\Phi$ actually maps $\Cc(\GGG,\fg)$
into $\cE'_{\trg}(\GG)$ follows from the next example:

\begin{ex}\label{Ex-Phi-on-basic-elements} \rm 
Let $E$ be a local bisection of $\GG$ and suppose that
$u\in\Univ(\fg)$ has compact support in $\trg(E)$.
Then we have
\begin{align*}
\Phi(\langle u,\EEE \rangle)(F)(x)
&=( (\germ_{\beta_E(x)}(E))^{-1}\cdot u_{x} )
  [F_{\beta_E(x)}](\beta_E(x)) \\
&=( \germ_{\alpha_{E^{-1}}(x)}(E^{-1})\cdot u_{x} )
  [F_{\beta_E(x)}](\beta_E(x)) \\
&=( \Ad_{E^{-1}}(u) )_{\tau^{-1}_E(x)}
  [F_{\beta_E(x)}](\beta_E(x)) \\
&=\Op(\Ad_{E^{-1}}(u))(F)(\beta_E(x)) \\
&=\overline{\Ad}_{E^{-1}}(\Op(u))(F)(\beta_E(x))
\end{align*}
for any $F\in\C(\GG)$ and any $x\in\trg(E)$, which shows that
\[
\Phi(\langle u,\EEE\rangle)=
\llbracket E,\overline{\Ad}_{E^{-1}}(\Op(u))\rrbracket
\in \cE'_{\trg}(\GG).
\]
Since $\Phi$ is additive and every element of
$\Cc(\GGG,\fg)$ can be written as a finite sum
of elements of this form, we conclude that $\Phi$
really maps into the space $\cE'_{\trg}(\GG)$. 

In particular, for any function $f\in\C(M)$ with compact support
in $\trg(E)$ we have
\[
\Phi(\langle f,\EEE\rangle)
=\llbracket E,\overline{\Ad}_{E^{-1}}(\bar{f})\rrbracket
=\llbracket E,\Omega(\Ad_{E^{-1}}(f))\rrbracket
=\llbracket E,\overline{f\com\tau_E}\rrbracket
=\llbracket E,f\com\tau_E\rrbracket.
\]
If $u\in\Univ(\fg)$ has
compact support in $M$, then
\[
\Phi(\langle u,M\rangle)=\llbracket M,\Op(u)\rrbracket.
\]
\end{ex}

Finally, we will show that
$\Phi$ is a homomorphism of algebras
and we will explicitly describe its kernel:

\begin{theo}
Let $\GG$ be a Hausdorff paracompact Lie groupoid
with the associated Lie algebroid $\fg$. Then we have:

(i) The map 
\[ \Phi_{\GG} : \Cc(\GGG,\fg) \to \cE'_{t}(\GG) \]
is a homomorphism of algebras.

(ii)
An element $a\in \Cc(\GGG,\fg)$ is in the kernel
of the homomorphism $\Phi_{\GG}$, if, and only if, we have
\[ \sum_{e\in\theta^{-1}(g)} \!\!\! e^{-1}\cdot a(e) = 0  \]
for all $g\in \GG$.
\end{theo}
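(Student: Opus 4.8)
The plan is to prove part~(i) by reducing to the algebra generators and part~(ii) by separating the contributions of individual arrows and then using smoothness to pass from values to germs.

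For part~(i), since $\Phi_{\GG}$ is additive and $\Cc(\GGG,\fg)$ is generated by the elements $\langle u,\EEE\rangle$, it suffices to verify multiplicativity on a product $\langle u',\EEEp\rangle\cdot\langle u,\EEE\rangle$. By the convolution formula of Subsection~\ref{Bialgebra of an action} (for the adjoint action the bialgebra isomorphism induced by $\EEEp$ is $\Univ(\Ad_{E'})$) this product equals $\langle u'\cdot\Univ(\Ad_{E'})(u),(E'\cdot E)^{\#}\rangle$, so by Example~\ref{Ex-Phi-on-basic-elements} its image under $\Phi_{\GG}$ is $\llbracket E'\cdot E,\overline{\Ad}_{(E'\cdot E)^{-1}}(\Op(u'\cdot\Univ(\Ad_{E'})(u)))\rrbracket$. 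I will simplify the operator inside using that $\Op$ is an algebra homomorphism, the commuting square $\Op\com\Univ(\Ad_{E'})=\overline{\Ad}_{E'}\com\Op$, the functoriality $\overline{\Ad}_{E_2\cdot E_1}=\overline{\Ad}_{E_2}\com\overline{\Ad}_{E_1}$ (which follows from $\Rt_{E_1}\com\Rt_{E_2}=\Rt_{E_2\cdot E_1}$) together with $(E'\cdot E)^{-1}=E^{-1}\cdot E'^{-1}$ and $\overline{\Ad}_{E'^{-1}}\com\overline{\Ad}_{E'}=\id$. This rewrites the operator as $\overline{\Ad}_{E^{-1}}(\overline{\Ad}_{E'^{-1}}(\Op(u')))\cdot\overline{\Ad}_{E^{-1}}(\Op(u))$, which is exactly the operator produced by Proposition~\ref{Prop-product-of-basic-distributions} for $\llbracket E',\overline{\Ad}_{E'^{-1}}(\Op(u'))\rrbracket\ast\llbracket E,\overline{\Ad}_{E^{-1}}(\Op(u))\rrbracket=\Phi_{\GG}(\langle u',\EEEp\rangle)\ast\Phi_{\GG}(\langle u,\EEE\rangle)$; this is a routine computation.

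For part~(ii), I first rewrite the defining formula by grouping the germs $e$ according to their image $\theta(e)$:
\[ \Phi_{\GG}(a)(F)(x)=\sum_{g\in\GG(-,x)}b(g)[F_g](g),\qquad b(g)=\!\!\sum_{e\in\theta^{-1}(g)}\!\!e^{-1}\cdot a(e)\in\Univ(\fg)_{\src(g)}. \]
The implication from $b(g)=0$ for all $g$ to $\Phi_{\GG}(a)=0$ is immediate. For the converse, fix $x$: the summands $F\mapsto b(g)[F_g](g)$ are distributions supported at the distinct points $g\in\GG(-,x)$, so vanishing of the finite sum forces $b(g)[F_g](g)=0$ for every $F$ and every such $g$; letting $x$ vary, this holds for all $g\in\GG$. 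Writing a representative of $b(g)$ in a local frame via the Poincar\'e--Birkhoff--Witt theorem as $\sum_\alpha f_\alpha X^\alpha$, one has $b(g)[F_g](g)=\sum_\alpha f_\alpha(\src(g))(\bar X^\alpha F)(g)$, and since the jets $(\bar X^\alpha F)(g)$ can be prescribed independently, this yields $f_\alpha(\src(g))=0$ for all $\alpha$. Thus $\Phi_{\GG}(a)=0$ only tells us that the \emph{value} of $b(g)$ at $\src(g)$ vanishes for every $g$.

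The main obstacle is upgrading this pointwise vanishing of values to vanishing of the germ $b(g_0)\in\Univ(\fg)_{\src(g_0)}$, and here I use the smoothness of the section $a$. Fixing $g_0$ and writing $\theta^{-1}(g_0)\cap\supp(a)=\{\germ_{g_0}(E_1),\dots,\germ_{g_0}(E_k)\}$ for bisections $E_i$ through $g_0$, the section $a$ is given near $\germ_{g_0}(E_i)$ by the germ of some $w_i\in\Univ(\fg)$, so for $g$ near $g_0$
\[ b(g)=\sum_{i:\,g\in E_i}(s_i)_{\src(g)},\qquad s_i=\Ad_{E_i^{-1}}(w_i). \]
For each $y$ near $y_0=\src(g_0)$ I partition $\{1,\dots,k\}$ by declaring $i\sim_y j$ when $\alpha_{E_i}(y)=\alpha_{E_j}(y)$; evaluating the established vanishing of the value of $b$ at the arrow $g=\alpha_{E_i}(y)$ gives the block relation $\sum_{j\sim_y i}s_j(y)=0$, and summing these relations over the blocks of the partition yields $\sum_{j=1}^{k}s_j(y)=0$. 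As this holds for all $y$ in a neighbourhood of $y_0$, the section $\sum_j s_j$ has vanishing germ at $y_0$; since every $E_j$ passes through $g_0$, that germ is exactly $b(g_0)$, so $b(g_0)=0$, completing the proof. The delicate point is that the bisections $E_i$ may overlap in complicated (and, as $\GGG$ is non-Hausdorff, non-separated) ways, which is precisely what the summation over the partition blocks is designed to circumvent.
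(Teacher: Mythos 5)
Your part~(i) is essentially the paper's own computation run in the opposite direction: you start from the convolution product $\langle u'\cdot\Univ(\Ad_{E'})(u),(E'\cdot E)^{\scriptscriptstyle\#}\rangle$ and simplify the resulting operator, while the paper starts from $\Phi(\langle u',\EEEp\rangle)\ast\Phi(\langle u,\EEE\rangle)$ and works toward the product; the ingredients (the product formula for $\llbracket E,D\rrbracket$, the square $\Op\com\Univ(\Ad_{E'})=\overline{\Ad}_{E'}\com\Op$, and the functoriality $\overline{\Ad}_{E_2\cdot E_1}=\overline{\Ad}_{E_2}\com\overline{\Ad}_{E_1}$) are identical. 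Part~(ii) is where you genuinely diverge, and your route is shorter. After the common first step (separating the finitely many arrows in a fibre to get that $F\mapsto b(g)[F_g](g)$ vanishes for each single $g$), the paper only extracts germ-level vanishing on a dense open set: it introduces the counting functions $\nu$ and $\mu$, treats the case $\nu(g)=1$ separately, uses lower semicontinuity of $\mu$, and finishes by continuity. You instead extract only the \emph{value} of $b(g)$ at $\src(g)$ (which does require the standard fact that the functionals $F\mapsto(\bar{X}^{\alpha}F)(g)$ for PBW monomials are linearly independent) and then observe that for \emph{every} $y$ near $y_0$ the per-arrow relations, one for each block of the partition $i\sim_y j\Leftrightarrow\alpha_{E_i}(y)=\alpha_{E_j}(y)$, sum to $\sum_j s_j(y)=0$; this kills all PBW coefficients on a whole neighbourhood and hence the germ at $y_0$, with no density argument at all. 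That is a real simplification.

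One step, however, fails as written. You assume $a$ is given near each point $\germ_{g_0}(E_i)$ of $\theta^{-1}(g_0)\cap\supp(a)$ by the germ of a single $w_i\in\Univ(\fg)$. Because $\GGG$ is non-Hausdorff, elements of $\Cc(\GGG,\fg)$ are by definition finite formal sums $\sum_i\langle u_i,\EEE_i\rangle$ and are \emph{not} locally constant sections of the \'etale space: in the paper's pair-groupoid example the value of $a$ at $\germ_{g_0}(E_{0,0})$ is $f_{\trg(g_0)}\neq 0$ while its value at every nearby point of $E_{0,0}^{\scriptscriptstyle\#}$ is $0$, so no such $w_i$ exists. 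Under your reading, the identity $b(g)=\sum_{i:\,g\in E_i}(s_i)_{\src(g)}$ would moreover double-count germs of distinct $E_i$ that merge at arrows $g\neq g_0$, so the block relation would not be what the vanishing of the value of $b(g)$ actually gives you. The repair is immediate: fix a representation $a=\sum_{i=1}^{m}\langle u_i,\EEE_i\rangle$ and set $s_i=\Ad_{E_i^{-1}}(u_i)$; then $b(g)=\sum_{i:\,g\in E_i}(s_i)_{\src(g)}$ holds identically, because the multiplicity with which a merged germ $e$ appears on the right is exactly the number of indices contributing to $a(e)$. You also need, as the paper does, to shrink the neighbourhood $U$ of $y_0$ so that bisections of the representation not passing through $g_0$ contribute nothing over $U$. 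With these two adjustments your block-summation argument goes through and yields a cleaner proof of (ii) than the one in the paper.
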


\begin{proof}
(i)
Choose local bisections $E$ and $E'$ of $\GG$
and let $u,u'\in\Univ(\fg)$ 
have compact supports contained in $\trg(E)$ respectively $\trg(E')$.
By Proposition~\ref{Prop-product-of-basic-distributions}
and Example~\ref{Ex-Phi-on-basic-elements} we get
\begin{align*}
\Phi(\langle u',\EEEp\rangle)
& \ast\Phi(\langle u,\EEE\rangle) =
\llbracket E',\overline{\Ad}_{E'^{-1}}(\Op(u'))\rrbracket
\ast
\llbracket E,\overline{\Ad}_{E^{-1}}(\Op(u))\rrbracket \\
&=
\llbracket
E'\cdot E,
\overline{\Ad}_{E^{-1}}
   \left( \overline{\Ad}_{E'^{-1}}(\Op(u'))\right) 
   \overline{\Ad}_{E^{-1}}(\Op(u) )
\rrbracket \\
&=
\llbracket
E'\cdot E,
\overline{\Ad}_{E^{-1}}
     \left(
     \overline{\Ad}_{E'^{-1}}(\Op(u')) \Op(u)
     \right)
\rrbracket \\
&=
\llbracket
E'\cdot E,
\overline{\Ad}_{E^{-1}}
     \left(
     \overline{\Ad}_{E'^{-1}}
         \left(
         \Op(u') \overline{\Ad}_{E'}(\Op(u))
         \right)
     \right)
\rrbracket \\
&=
\llbracket
E'\cdot E,
\overline{\Ad}_{(E'\cdot E)^{-1}}
         \left(
         \Op(u') \Op(\Ad_{E'}(u))
         \right)
\rrbracket \\
&=
\llbracket
E'\cdot E,
\overline{\Ad}_{(E'\cdot E)^{-1}}
         \left(
         \Op(u'\Ad_{E'}(u))
         \right)
\rrbracket \\
&=
\Phi(
\langle
u'\Ad_{E'}(u)
,\EEEp\cdot\EEE
\rangle)
=
\Phi(
\langle u',\EEEp\rangle
\cdot
\langle
u,\EEE
\rangle
).
\end{align*}
This shows that $\Phi$ preserves product
on elements which are supported on local bisections.
Since the set of such elements generates the abelian group
$\Cc(\GGG,\fg)$, this is sufficient.

(ii,$\Leftarrow$)
Assume that $a\in \Cc(\GGG,\fg)$ satisfies
$\sum_{e\in\theta^{-1}(g)} e^{-1}\cdot a(e) = 0$
for all $g\in \GG$.
Then we have
\begin{align*}
\Phi(a)(F)(x)
&=
\!\!\!\sum_{e\in\GGG(-,x)}\!\!\!
(e^{-1}\cdot a(e))[F_{\theta(e)}](\theta(e)) \\
&=
\sum_{g\in\GG(-,x)} \Big(
\sum_{e\in\theta^{-1}(g)}\!\!\!
e^{-1}\cdot a(e) \Big)
[F_{g}](g) = 0
\end{align*}
for any $x\in M$ and any $F\in\C(\GG)$,
thus $\Phi(a)=0$.

(ii,$\Rightarrow$)
Take any $a\in \Cc(\GGG,\fg)$ with $\Phi(a)=0$.
For any $x\in M$
and any function $F\in\C(\GG)$
we have
\begin{align*}
0=\Phi(a)(F)(x)
&=
\sum_{g\in\GG(-,x)} \Big(
\sum_{e\in\theta^{-1}(g)}\!\!\!
e^{-1}\cdot a(e) \Big) [F_{g}](g).
\end{align*}
For any $g\in\GG(-,x)$ we can find a new function
$F'\in\C(\GG)$ such that $F'_g=F_g$ and
$F'_{\theta(e)}=0$ for any $e\in\GGG(-,x)$ with
$\theta(e)\neq g$ and $a(e)\neq 0$.
This implies that we have in fact
\[ \sum_{e\in\theta^{-1}(g)}\!\!\!
   (e^{-1}\cdot a(e)) [F_{g}](g) = 0 \]
for any $g\in \GG$ and any $F\in\C(\GG)$.

Write $a=\sum_{i=1}^m \langle u_i,\EEE_i\rangle$
for some local bisections $E_i$ of $\GG$ and
elements $u_i\in\Univ(\fg)$ with compact support in $\trg(E_i)$,
$i=1,\ldots,m$.
For any $g\in\GG$, let $\nu(g)$ be the number of elements
of the set 
$ \{ \germ_g(E_i)\in\GGG \,;\, i=1,\ldots,m,\, g\in E_i \}$
and put $A(g)=\{ i\in\{1,\ldots,m\} \,;\, g\in E_i \}$.

For any $g\in\GG$ with $\nu(g)=0$ we clearly have
$a(e)=0$ for any $e\in\theta^{-1}(g)$.

Take any $g\in\GG$ with $\nu(g)=1$.
We can find a small open neighbourhood $W$ of $g$ in $\GG$
which does not intersect the compact sets
$\beta_{E_j}(\supp(u_j))$, $j\in\{1,\ldots,m\}\setminus A(g)$.
We can choose a local bisection $E$ of $\GG$ such that
$g\in E\subset W$ and $E\subset E_i$ for any $i\in A(g)$,
because $\nu(g)=1$.
For any $h\in E$ and any $F\in\C(\GG)$ we then have
\begin{align*}
0 &=\sum_{e\in\theta^{-1}(h)}\!\!\!
   (e^{-1}\cdot a(e)) [F_{h}](h)
=  \big(\germ_h(E)^{-1}\cdot a(\germ_h(E))\big) [F_{h}](h) \\
&=\sum_{i\in A(g)}
\big(\germ_{h^{-1}}(E^{-1})\cdot (u_i)_{\trg(h)}\big) [F_{h}](h) 
=\sum_{i\in A(g)}
(\Ad_{E^{-1}}(u_i))_{\src(h)} [F_{h}](h) \\
&=\sum_{i\in A(g)}
\big( \Op(\Ad_{E^{-1}}(u_i))(F)\big)_{h} (h)
=\sum_{i\in A(g)}
\Op(\Ad_{E^{-1}}(u_i))(F)(h) \\
&=\sum_{i\in A(g)}
\Op(\Ad_{E^{-1}}(u_i))(F)(\Lt_E(1_{\src(h)}))
=
\sum_{i\in A(g)}
\Op(\Ad_{E^{-1}}(u_i))(F\com \Lt_E)(1_{s(h)}).
\end{align*}
This shows that
$\sum_{i\in A(g)} (\Ad_{E^{-1}}(u_i))=0$ on
$\src(E)$, and in particular
\begin{align*}
(\germ_{g}(E))^{-1}\cdot & \,a(\germ_{g}(E))
= (\germ_{g}(E))^{-1}\cdot \sum_{i\in A(g)}
  (u_i)_{\trg(g)} \\
&= \sum_{i\in A(g)} (\Ad_{E^{-1}}(u_i))_{\src(g)} 
= 0 ,
\end{align*}
so $a(\germ_{g}(E))=0$. 
We can conclude that $a(e) = 0$
for any $g\in\GG$ with $\nu(g)=1$
and any $e\in\theta^{-1}(g)$.

Finally, choose any $g\in\GG$ and put $y=\src(g)$.
Let $B$ be the set of indexes
$j\in \{1,\ldots,m\}$ with $y\in\src(E_j)$.
Choose a small open neighbourhood
$U$ of $y$ in $M$ such that $U\subset\src(E_j)$
for all $j\in B$.
We can take $U$ so small
that $\alpha_{E_i}(U)$ does not intersect $\alpha_{E_j}(U)$,
for all $i\in A(g)$ and $j\in B\setminus A(g)$. Next,
for any $j\in \{1,\ldots,m\}\setminus B$ we can
represent the element $\langle u_j,\EEE_j\rangle$
by a smaller local bisection and shrink $U$ further
so that $U$ does not intersect $\src(E_j)$.
Note that this modification does not increase the values
of the function $\nu$.

For any $z\in U$ and $i\in A(g)$ write
$e_{i,z}=\germ_{\alpha_{E_i}(z)}(E_i)$, so
the germ of the element
\[
u=\sum_{i\in A(g)} \Ad_{E_i^{-1}}(u_i)|_{U} \in\Univ(\fg|_U)
\]
at $z$ equals
\[ u_z = 
\sum_{i\in A(g)} e_{i,z}^{-1}\cdot 
     (u_i)_{\tau_{E_i}(z)} \in\Univ(\fg)_{z}.\]

For any $z\in U$ denote by
$\mu(z)$ the number of elements of the set
\[ I(z)=\{ \alpha_{E_i}(z) \,;\, i\in A(g) \}. \]
Observe that the function $\mu$ is lower semi-continuous.
In particular, if $V$ is any non-empty
open subset of $U$
and $m_V$ is the maximal value
of $\mu|_V$, then the non-empty
set $O_V=(\mu|_V)^{-1}(m_V)$ is open in $V$.
Now observe that for any $z\in O_V$ and
any $h\in I(z)$ we have
$\nu(h)= 1$. In other words,
there is exactly one element, say $e(h)$,
in the intersection
$\theta^{-1}(h)\cap \{ e_{i,z} \,;\, i\in A(g) \}$.
By the argument above it follows that
\[ a(e(h))=0 \]
for any $z\in O_V$ and
any $h\in I(z)$.
This implies
\begin{align*}
 u_z 
&= 
\sum_{i\in A(g)} e_{i,z}^{-1}\cdot (u_i)_{\tau_{E_i}(z)}
= \sum_{h\in I(z)} \,\,
  \sum_{{\substack{i\in A(g); \\ h\in E_i}}}
   e(h)^{-1}\cdot (u_i)_{\tau_{e(h)}(z)} \\
&= \sum_{h\in I(z)}
  e(h)^{-1}\cdot \sum_{\substack{i\in A(g); \\ h\in E_i}} 
   (u_i)_{\tau_{e(h)}(z)} 
=\sum_{h\in I(z)}
  e(h)^{-1}\cdot a(e(h))=0  
\end{align*}
for any $z\in O_V$.
Since this is true for any non-empty open subset $V$ of $U$,
the set of points $z\in U$ for which
$u_z=0$ is dense in $U$.
By continuity it follows that $u=0$.
In particular, we have
\begin{align*}
\sum_{e\in\theta^{-1}(g)} e^{-1}\cdot a(e)
&=
\sum_{e\in\theta^{-1}(g)} e^{-1}\cdot
   \sum_{\substack{i\in  A(g); \\  e_{i,y}=e}}
   (u_i)_{\trg(e)}
=
\sum_{e\in\theta^{-1}(g)} \,
   \sum_{\substack{i\in  A(g); \\  e_{i,y}=e}}
   e_{i,y}^{-1}\cdot (u_i)_{\trg(e_{i,y})}  \\
&=
\sum_{i\in A(g)} e_{i,y}^{-1}\cdot (u_i)_{\tau_{E_i}(y)}
= u_y =0. \qedhere
\end{align*}
\end{proof}

\begin{ex} \rm
(1) 
If $E$ is a local bisection of $\GG$,
$u\in\Univ(\fg)$ has compact support in $\trg(E)$ and
$\Phi_{\GG}(\langle u,\EEE\rangle)=0$, then we have
$u=0$. It follows that
$\Phi_{\GG}$ is a monomorphism on the image of the
algebra embedding
$\Univ_{c,M}(\fg)\to\Cc(\GGG,\fg)$, $u\mapsto\langle u,M\rangle$.

(2)
Choose an increasing function $\varphi\in\C(\RR)$
such that $\frac{d^n\varphi}{dt^{n}}(0)=0$ for any
$n\in\{0,1,\ldots\}$. For any $i,j\in\{0,1\}$
let $f_{i,j}:\RR\to\RR$ be the diffeomorphism given by
\[ f_{i,j}(t)=
\left\{
\begin{array}{lcl}
t+ 2^i\varphi(t)\,;& t\leq 0, \\
t+ 2^j\varphi(t)\,;& t\geq 0,
\end{array}
\right. 
\]
and let $E_{i,j}\subset \RR\times\RR$ be the graph
of this diffeomorphism.
Note that each $E_{i,j}$ is a local
bisection (and, in this case, a global bisection)
of the pair Lie groupoid $\RR\times \RR$ over $\RR$
with the associated Lie algebroid $\T(\RR)$.
Choose a function $f\in\Cc(\RR)$ with $f(0)\neq 0$ and
consider the element
\[ a=\!\!\!\sum_{i,j\in\{0,1\}}\!\!\!
   \langle (-1)^{i+j}f,\EEE_{i,j}\rangle 
   \in \Cc((\RR\times \RR)^{\scriptscriptstyle\#})
   \subset\Cc((\RR\times \RR)^{\scriptscriptstyle\#},\T(\RR)).
\]
We have $a\neq 0$ and $\Phi_{\RR\times\RR}(a)=0$,
so $\Phi_{\RR\times\RR}$ is not a monomorphism.

(3)
If $\GG$ is a Lie groupoid over a discrete space,
then $\Phi_{\GG}$ is a monomorphism.

(4) If $\EE$ is a Hausdorff paracompact
\'etale Lie groupoid over $M$, then
$\T^{\trg}_M(\EE)=0$,
$\Cc(\EE^{\scriptscriptstyle\#},\T^{\trg}_M(\EE))=\Cc(\EE)$
and $\Phi_{\EE}$ is an isomorphism.

(5)
Let $K$ be a Lie group with Lie algebra $\fk$,
and let $\Univ(\fk)$ be the universal enveloping algebra of $\fk$.
Then $K^{\scriptscriptstyle \#}$ is the set $K$ with
discrete  topology,
so any element
$a\in\Cc(K^{\scriptscriptstyle \#},\fk)$
can be written as a finite sum
\[a=\sum\limits_{i=1}^{n}\langle u_{i},k_{i}\rangle,\]
for some elements 
$k_{1},\ldots,k_{n}\in K$
and some $u_{1},\ldots,u_{n}\in\Univ(\fk)$.
Under the
above representation,
the element $\langle u_{i},k_{i}\rangle$ is mapped
to a distribution on $K$ which corresponds to
the evaluation of the 
left invariant operator
$\Op(\Ad_{k_{i}^{-1}}(u_i))$ at the point $k_{i}$. 
The image of the homomorphism
$\Phi_{K}$ is the space $\cE'_{\text{fin}}(K)$
of distributions with
finite support on $K$.

(6)
The homomorphism
$\Phi_{\GG}$ can be used to define a natural locally convex 
topology on the algebra $\Cc(\GGG,\fg)$. We have the $\Cc(M)$-bilinear
pairing 
\[
\Cc(\GGG,\fg)\times\C(\GG)\to\Cc(M),\qquad (a,F)\mapsto\Phi_{\GG}(a)(F).
\] 
For any bounded subset $B\subset\C(\GG)$ with respect
to the Fr\'echet topology
and any continuous seminorm $q$ on $\Cc(M)$
with respect to the $LF$-topology
we define a seminorm $p_{B,q}$
on $\Cc(\GGG,\fg)$ by
\[
p_{B,q}(a)=\sup\big\{\,q\big(\Phi_{\GG}(a)(F)\big)\,;\, F\in B \big\}.
\]
The topology on $\Cc(\GGG,\fg)$ is then defined by the family $\{p_{B,q}\}$ as $B$ 
ranges over all bounded subsets of $\C(\GG)$ and $q$ over all
continuous seminorms on $\Cc(M)$. In general, this topology is not Hausdorff
and we have $\ker(\Phi_{\GG})=\mathrm{Cl}_{\Cc(\GGG,\fg)}\{0\}$.
\end{ex}

\end{document}